\tikzstyle{block} = [draw, fill=white, rectangle, 
\tikzstyle{sum} = [draw, fill=white, circle, node distance=1cm]
\tikzstyle{input} = [coordinate]
\tikzstyle{output} = [coordinate]
\tikzstyle{pinstyle} = [pin edge={to-,thin,black}]
\newtheorem{definition}{Definition}
\newtheorem{theorem}{Theorem}
\newtheorem{Lemma}{Lemma}
\newtheorem{asp}{Assumption}
\newtheorem{prob}{Problem}
\newtheorem{proposition}{Proposition}
\newtheorem{cor}{Corollary}
\newtheorem{remark}{Remark}
\DeclareMathOperator{\Null}{\mathrm{Ker}}
\DeclareMathOperator{\Span}{\mathrm{span}}
\DeclareMathOperator{\diag}{\mathrm{diag}}
\DeclareMathOperator{\range}{\mathrm{IM}}
\begin{document}
\title{Extending the Leader-First Follower Structure for Bearing-only Formation Control on Directed Graphs}
\author{Jiacheng Shi and Daniel Zelazo~\IEEEmembership{Senior~Member,~IEEE}  \thanks{This work was supported by the Israel Science Foundation grant no. 453/24 and the Technion Autonomous Systems Program. 
 (Corresponding author: Daniel Zelazo) J. Shi (e-mail: sjc199921@gmail.com) is with the the Technion Autonomous Systems Program and D. Zelazo (e-mail: dzelazo@technion.ac.il) is with the Stephen B. Klein Faculty of Aerospace Engineering, both at the Technion-Israel Institute of Technology, Haifa 3200003, Israel. 
}
}
\maketitle

\begin{abstract}
\textcolor{black}{This work introduces a generalization of the leader-first follower (LFF) graph structure for solving the bearing-only formation control problem on directed graphs.
} The first  contribution provides an equilibrium, stability, and convergence analysis for a one-follower, multi-leader system (which is not an LFF graph).  We then propose an extension to the LFF structure, termed \emph{ordered} LFF graphs, that allows for additional forward directed edges to be included.  Using the results of the one-follower multi-leader system we show that the ordered LFF graphs can be used to solve the directed bearing-only formation control problem.  We also show that these structures offer improved convergence speed as compared to the LFF graphs.  Numerical simulations are provided to validate the results.

\end{abstract}

\begin{IEEEkeywords}
Directed sensing, Formation control, Multi-agent systems
\end{IEEEkeywords}

\section{INTRODUCTION}

Formation control has obtained significant attention across a wide range of fields, including robotics \cite{lewis1997high}, aerial and ground vehicle networks \cite{ren2006consensus}, and swarm robotics \cite{xu2014behavior}.  The primary task in formation control is to drive a team of autonomous systems into a desired spatial configuration.  As a cornerstone problem in multi-agent coordination, one of the challenges in formation control is the design of distributed control protocols that balance the sparsity of information exchange with the performance of the system.  In this direction, there is a considerable body of literature that addresses this problem for a variety of different formation and sensing constraints.  These include position-constrained formation control \cite{ren2007distributed}, displacement-constrained formation control \cite{de2020maneuvering}, distance-constrained formation control \cite{Krick2009}, bearing-constrained formation control \cite{zhao2015bearing}, and most recently angle-constrained formation control \cite{Chen_TAC2021}. The reader is referred to \cite{oh2011formation, ahn2019formation, Zhao_CSM2019} for an overview of this area.

Despite recent progress in the study of formation control problems, there remains a large gap between the theoretical advances and their real-world implementation.  Indeed, many works on multi-agent systems assume undirected communication and sensing networks.  In reality, sensing employed in, for example, robotic systems is inherently uni-directional.  In other words, if agent $i$ can sense agent $j$, it is not necessarily true that agent $j$ can sense agent $i$.  The problem of directed formation control was originally studied by Hendrickx et. al in \cite{Hendrickx2006} where the notion of \emph{persistence} was introduced to describe consistency in directed distance-constraint frameworks.  This work however did not consider formation control strategies for directed frameworks, but rather attempted to characterize the feasibility sets of directed frameworks.  Several works studied the stability and equilibria of very small or peculiar formations in distance-constrained frameworks; see \cite{Belabbas2013, Babazadeh2020,Yu_SIAMJCO2009}.  For bearing-constrained formation control problems similar approaches have been taken. In \cite{Zhao2015persistance}, the notion of bearing persistence was introduced, although a stability proof for the corresponding linear bearing-based directed formation control law remains open.  This work was extended in \cite{Sun2023} but focused on the rigidity-theoretic understanding of bearing persistence rather than the stability and convergence of directed formation control strategies.    For general directed constraint networks (both distance and bearings), it remains an open challenge to i) characterize the equilibria of formation dynamics, ii) assess the stability of the equilibria, and iii) determine graph and rigidity theoretic conditions for the existence of directed frameworks that admit solutions to the formation control problem.

To illustrate the challenge associated to formation control over directed graphs, consider the example in Figure \ref{fig:ex1}.  Here we task a team of integrator agents embedded in the plane to obtain a hexagonal formation, while inter-agent interaction is restricted according to the sensing graphs in Figure \ref{fig:ex1_tf_1} and \ref{fig:ex1_tf_2} respectively.  Note that the only difference is the direction of the sensing edge between agent $1$ and $4$. Figures \ref{fig:ex1_tr_1} and \ref{fig:ex1_tr_2} show the different agent trajectories, when implementing the formation control strategy proposed in \cite{zhao2015bearing} but adapted for directed sensing.\footnote{Details of this control law will be reviewed in Section \ref{sec.bearingformations}.} The graph in Figure \ref{fig:ex1_tf_1} is not able to converge to the correct formation while the one in Figure \ref{fig:ex1_tf_2} is. Of note is that the undirected version of both graphs are minimally infinitesimally bearing rigid, and therefore the undirected implementation of the control law is guaranteed to converge to the correct formation \cite{zhao2015bearing}.
\begin{figure*}[h]
    \centering
    \begin{subfigure}[b]{0.23\linewidth}
    \centering
    \begin{tikzpicture}
    \Vertex[size=.5,label=$p_1$,x=0.5,y=0.866]{v1}
    \Vertex[size=.5,label=${p_2}$,x=-0.5,y=0.866]{v2}
    \Vertex[size=.5,label=$p_3$,x=-1.2,y=0]{v3}
    \Vertex[size=.5,label=$p_4$,x=-0.5,y=-0.866]{v4}
    \Vertex[size=.5,label=$p_5$,x=0.5,y=-0.866]{v5}
    \Vertex[size=.5,label=$p_6$,x=1.2,y=0]{v6}
    \Edge[Direct](v2)(v1)
    \Edge[Direct](v3)(v1)
    \Edge[Direct](v3)(v2)
    \Edge[Direct](v4)(v2)
    \Edge[Direct,color=red](v3)(v4)
    \Edge[Direct](v4)(v6)
    \Edge[Direct](v5)(v3)
    \Edge[Direct](v5)(v4)
    \Edge[Direct](v6)(v1)
    \Edge[Direct](v6)(v3)
    \Edge[Direct](v6)(v5)
    \end{tikzpicture}
    \caption{A bad sensing graph.}
    \label{fig:ex1_tf_1}
    \end{subfigure}
    \begin{subfigure}[b]{0.23\linewidth}
        \centering
    \includegraphics[width=1\linewidth]{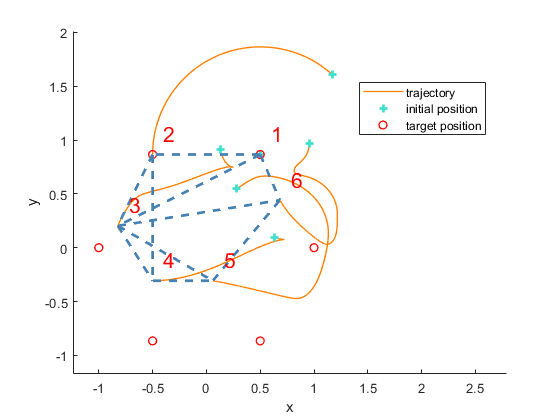}
    \caption{Agents fail to converge to the correct formation.}
    \label{fig:ex1_tr_1}
    \end{subfigure}
    \begin{subfigure}[b]{0.23\linewidth}
    \centering
    \begin{tikzpicture}
    \Vertex[size=.5,label=$p_1$,x=0.5,y=0.866]{v1}
    \Vertex[size=.5,label=${p_2}$,x=-0.5,y=0.866]{v2}
    \Vertex[size=.5,label=$p_3$,x=-1.2,y=0]{v3}
    \Vertex[size=.5,label=$p_4$,x=-0.5,y=-0.866]{v4}
    \Vertex[size=.5,label=$p_5$,x=0.5,y=-0.866]{v5}
    \Vertex[size=.5,label=$p_6$,x=1.2,y=0]{v6}
    \Edge[Direct](v2)(v1)
    \Edge[Direct](v3)(v1)
    \Edge[Direct](v3)(v2)
    \Edge[Direct](v4)(v2)
    \Edge[Direct,color=red](v4)(v3)
    \Edge[Direct](v4)(v6)
    \Edge[Direct](v5)(v3)
    \Edge[Direct](v5)(v4)
    \Edge[Direct](v6)(v1)
    \Edge[Direct](v6)(v3)
    \Edge[Direct](v6)(v5)
    \end{tikzpicture}
    \caption{A good sensing graph.}
    \label{fig:ex1_tf_2}
    \end{subfigure}
    \begin{subfigure}[b]{0.23\linewidth}
        \centering
    \includegraphics[width=1\linewidth]{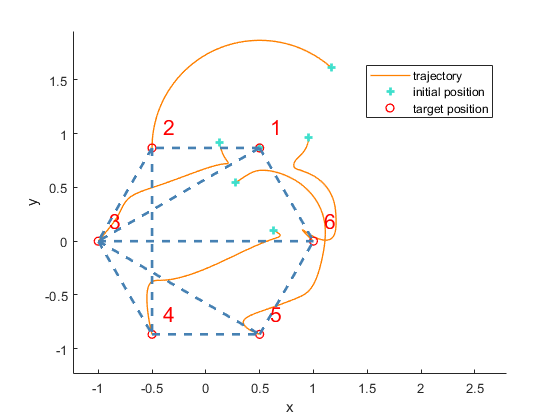}
    \caption{Agents do converge to the correct formation.}
    \label{fig:ex1_tr_2}
    \end{subfigure}
    \caption{Example demonstrating the challenge of formation control with directed sensing.}
    \label{fig:ex1}
\end{figure*}

\textcolor{black}{An important step toward understanding bearing-only formation control under directed sensing was made in \cite{trinh2018bearing}.}
In this work, they showed that for a special class of directed graphs, known as the \emph{leader first follower} (LFF) graphs, the bearing-only formation control law proposed in \cite{zhao2015bearing} but adapted for directed sensing almost globally converges to the desired formation.  \textcolor{black}{{LFF} graphs have a special structure where one node has no out edges (the leader), the first follower node has only one out edge towards the leader, and all other nodes have exactly two outgoing edges.} The main idea of this work is that these LFF graphs lead to a cascade system structure facilitating the stability and convergence proof.  Nevertheless, LFF graphs are restrictive and this work aims to extend the class of directed graphs that can be used to solve the bearing formation control problem with directed sensing.


The contribution of this paper focuses on extending the LFF structure to solve the bearing-only formation control problem over directed graphs.  In this direction, our first result considers a simpler setup consisting of a single follower agent and many leaders.  We provide an analysis to characterize the equilibria configuration for this system and discuss their stability and convergence properties.  It turns out that the analysis of this simpler system is crucial for a more general extension of LFF graphs, which leads to our second contribution.  We extend the BOFC framework to accommodate augmented LFF graphs, incorporating additional forward-directed edges while preserving the cascade structure of the system.  \textcolor{black}{Compared to LFF graphs, the augmented LFF graphs allow for nodes to have \emph{at least} 2 outgoing edges as opposed to exactly 2.}  We provide an analysis of the corresponding equilbria and stability of the system. Lastly, we provide simulation studies to demonstrate the feasibility and performance improvements enabled by the proposed extensions, showcasing faster convergence rates and increased flexibility in network design. These results offer a significant step toward more robust and adaptable BOFC solutions in directed sensing scenarios.

The remainder of this paper is organized as follows. Section \ref{sec.bearingformations} provides a brief overview of bearing-only formation control (BOFC) and revisits key results for undirected and directed graphs. Section \ref{sec.extendLFF} presents our main contributions, beginning with an analysis of the one-to-many BOFC setup and extending to LFF graph structures with additional forawrd edges. Section \ref{sec.simulation} illustrates the theoretical results with numerical simulations, highlighting performance improvements, and demonstrating the feasibility of the proposed methods. Finally, Section \ref{sec.conclusion} concludes the paper and discusses potential directions for future work.

\paragraph*{Notations} 
Throughout this paper, $\mathbb{R}^n$ denotes the $n$-dimensional real vector space, and $\|\cdot\|$ represents the Euclidean norm for vectors. The identity matrix of size $n \times n$ is denoted by ${I}_n$, while $\vmathbb{1}_n$ represents the all-ones vector of dimension $n$. For a set of matrices or vectors $\{M_i\}_{i=1}^n$, $\mathrm{diag}(M_i)_{i=1}^n$ denotes the block diagonal matrix with $M_i$ as its diagonal blocks.  When the set of matrices are clear from context we write only $\mathrm{diag}(M_i)$.  The Kronecker product is denoted by $\otimes$. The image and kernel of a matrix $M$ are represented by $\range(M)$ and $\Null(M)$, respectively. \textcolor{black}{A symmetric matrix $M$ is positive definite (semi-definite) if the quadratic form satisfies $x^TMx > 0$ ($x^TMx\geq0$) and is denoted as $M \succ 0$ ($M\succeq 0$). The unit sphere in $\mathbb{R}^{\mathrm d}$ is the set $\mathbb{S}^{\mathrm d-1}=\{ x \in \mathbb{R}^{\mathrm d} \,:\, \|x\|=1\}.$}

\section{Bearing-Only Formation Control}\label{sec.bearingformations}

In this section we provide a brief overview of bearing only formation control (BOFC) problem for the both undirected and directed settings.  We begin with the general setup and then present the key results from \cite{zhao2015bearing} and \cite{trinh2018bearing}.

We consider a network of $n$ agents described by the integrator dynamics,
\begin{align}\label{integrator}
    \dot p_i(t) & = u_i(t), \; i=1,\ldots, n,
\end{align}
where $p_i(t),u_i(t)\in \mathbb{R}^\mathrm{d}$ are, respectively, the position and velocity control of each agent.  Here, $\mathrm{d}$ represents the ambient dimension for the system, and we typically assume $\mathrm{d}\in \{2,3\}$.  The vector $p(t) = \begin{bmatrix} p_1(t)^T & \cdots & p_n(t)^T\end{bmatrix}^T$ is denoted as the system \emph{configuration}.

Agents can interact with each other according to a static graph, described by the pair $\mathcal G=(\mathcal V,\mathcal E)$.  Here $\mathcal V =\{1,\ldots,n\}$ is the \emph{node set}, and $\mathcal E \subseteq \mathcal V \times \mathcal V$ is the \emph{edge set}.  The notation $ij\in\mathcal E$ denotes that node $i \in \mathcal V$ is connected to node $j\in \mathcal V$.  The graph may be \emph{undirected}, in which case if $ij \in \mathcal E$, then $ji \in \mathcal{E}$, or \emph{directed} which means that $ij \in \mathcal E$ does not imply that $ji\in \mathcal E$ \cite{Mesbahi2010}.

A \emph{bearing formation} is the vector $\bm{\mathrm g} = \begin{bmatrix} \bm{\mathrm g}_{1}^T & \cdots & \bm{\mathrm g}_{|\mathcal E|}\end{bmatrix} \in \mathbb{R}^{\mathrm d|\mathcal E|}$, \textcolor{black}{with $\bm{\mathrm g}_{i} \in \mathbb{S}^{\mathrm d-1}$},  specifying the desired bearing between neighboring agents. Naturally, we are concerned with bearing formations that are actually realizable by some configuration $\mathrm p \in \mathbb{R}^{\mathrm{d}n}$.  In this direction, we introduce the notion of the \emph{bearing function}, $F_B:\mathbb{R}^{\mathrm{d}n}\to\mathbb{R}^{\mathrm{d}|\mathcal E|}$, defined as 
\begin{align}
    F_B(p)&=\begin{bmatrix} g_1^T & \cdots & g_{|\mathcal E|}^T\end{bmatrix}^T,
\end{align}
where for edge $k=ij\in\mathcal{E}$, the vector $g_k$ is the unit vector pointing from $p_i$ to $p_j$,
\begin{align}\label{bearing_vector}
g_k := g_{ij} = \frac{p_j-p_i}{\|p_j-p_i\|} \textcolor{black}{\in \mathbb{S}^{\mathrm d -1}}.
\end{align}
\textcolor{black}{We also assume that all agents have access to a common global reference frame and each agent can sense this relative bearing vector to its neighbor agents.}

With this notation, we define a \emph{bearing formation}, $(\mathcal{G},g)$,  by associating the edges in the graph $\mathcal{G}$ with the bearing measurements $g=F_B(p)$.  We now provide a formal definition for a realizable bearing formation.
\begin{definition}
    A bearing formation $(\mathcal G,\mathrm g)$ is \emph{realizable} in $\mathbb{R}^{\mathrm{d}}$ if there exists a configuration $\mathrm p\in\mathbb{R}^{\mathrm{d}n}$ satisfying $\mathrm p \in F_B^{-1}(\mathrm g)$.
\end{definition}

We now present the general bearing-only formation control problem.  Note that the above set-up and the following problem statement does not depend on whether $\mathcal G$ is directed or undirected.  In the sequel we will explore how directedness affects the solutions.

\begin{prob}\label{prob.formationcontrol}
Consider a collection of $n$ agents described by \eqref{integrator} that interact over a graph $\mathcal G$ and let $(\mathcal G, \bm{\mathrm g})$ be a realizable bearing formation.  Design a distributed control for each agent using only bearing measurements obtained from neighboring agents, i.e., a control of the form $u_i(t) = \sum_{ij\in \mathcal E}\kappa_{ij}(g_{ij},\bm{\mathrm g}_{ij})$ that
 drives the system to the target formation, i.e.,
    $$ \lim_{t\to \infty} g(t)=\lim_{t\to \infty}F_B(p(t)) = \bm{\mathrm{g}}.$$ 
\end{prob}
\textcolor{black}{The functions \( \kappa_{ij} : \mathbb{S}^{\mathrm d-1} \times \mathbb{S}^{\mathrm d-1} \to \mathbb{R}^{\mathrm d} \) can be interpreted as the control implemented on each edge in the graph.} We now review the solutions to Problem \ref{prob.formationcontrol} for the undirected and directed cases.

\subsection{BOFC for Undirected Graphs}\label{subsec_undirBOFC}

A solution to Problem \ref{prob.formationcontrol} for undirected graphs was initially proposed in \cite{zhao2015bearing}.  The control strategy has the form
\begin{align}\label{undirBOFC}
u_i(t) & = -\sum_{ij\in \mathcal E}P_{g_{ij}(t)} \bm{\mathrm{g}}_{ij}, \, i=1,\ldots,n,
\end{align}
where $P_{x} \in \mathbb{R}^{\mathrm d \times \mathrm d}$ is the orthogonal projection matrix \textcolor{black}{onto the subspace orthogonal to the vector $x$}, defined as 
\begin{align}\label{projmat}
    P_{x} = I_{\mathrm d} - \frac{x}{\|x\|}\frac{x^T}{\|x\|}.
\end{align}
It is convenient to represent the control \eqref{undirBOFC} in an aggregated matrix form as
\begin{align}\label{undirBOFC_mat}
    u(t)=\bar{H}^T \diag (P_{g_{ij}(t)}) \bm{\mathrm{g},}
\end{align}
where \textcolor{black}{$\bar H = H \otimes I_{\mathrm d} \in \mathbb{R}^{{\mathrm d}|\mathcal{E}| \times {\mathrm d}n}$, with $H \in \mathbb{R}^{|\mathcal{E}| \times n}$  the incidence matrix associated with the graph $\mathcal G$, defined as
\begin{equation}
    [ H]_{ki}=
    \begin{cases}
    1,&  \text{node } i \text{ is  positive   end   of   edge }  e_k\\
    -1,&  \text{node } i \text{ is  negative   end   of   edge }  e_k\\
    0,& \text{otherwise}    .\end{cases}
\end{equation}}
The matrix form of the control \eqref{undirBOFC_mat} turns out to be related to the \emph{bearing rigidity matrix} for a bearing framework.  The bearing rigidity matrix is defined by the Jacobian of the bearing function $F_B$, and has the form
\begin{align*}
    R_B(p) =  \diag \left(\frac{P_{g_{ij}}}{d_{ij}}\right)\bar H,
\end{align*}
where $d_k = d_{ij} = \|p_i-p_j\|$ is the distance between points $p_i$ and $p_j$ when $ij\in \mathcal{E}$.  With this definition, the control can be expressed as
$$u(t) = \diag(d_{ij})R_B(p)^T\bm{\mathrm{g}}.$$
For details on bearing rigidity theory and the bearing rigidity matrix, the reader is referred to \cite{zhao2015bearing}.  
The main result from \cite{zhao2015bearing} states that if the target bearing formation is infinitesimally bearing rigid, then the control \eqref{undirBOFC_mat} almost globally and exponentially converges to the desired formation.  

\subsection{BOFC for Directed Graphs}\label{subsec_dirBOFC}

A natural approach for solving Problem \ref{prob.formationcontrol} for directed graphs is to simply try the same control as in \eqref{undirBOFC}, where the sum over the edges $ij\in \mathcal{E}$ are now directed.  The aggregated matrix version of the control takes a slightly modified form arising from a redefinition of the incidence matrix for directed graphs. \textcolor{black}{Define now the \emph{out-incidence} matrix for the directed graph $\mathcal G$, denoted as $ H_{\otimes}\in \mathbb{R}^{ |\mathcal E| \times  n}$, as
\begin{equation}
    [ H_{\otimes}]_{ki}=
    \begin{cases}
    1,&  \text{node } i \text{ is  positive   end   of   edge }  e_k\\
    0,& \text{otherwise}   \end{cases},
\end{equation}
and $\bar H_{\otimes} = \bar H \otimes I_{\mathrm d}$.}
Then the proposed control takes the form
\begin{equation}
    u(t)=\bar{H}_{\otimes}^T \diag (P_{g_{ij}(t)}) \bm{\mathrm{g}}.
    \label{eqn:law_di}
\end{equation}

We now recall the example shown in Figure \ref{fig:ex1}. In this example, the undirected version of the graph leads to a target bearing formation that is minimally infinitesimally bearing rigid, and therefore the undirected control law \eqref{undirBOFC_mat} solves Problem \ref{prob.formationcontrol}.  However, the example shows that depending on what orientation is used to solve the problem over directed graphs, it may or may not converge to the correct formation. One of the main contributions of \cite{trinh2018bearing} was to propose a class of directed graphs that solve Problem \ref{prob.formationcontrol} using the control \eqref{eqn:law_di}.

\begin{definition} A directed graph is a \emph{leader-first follower } (LFF) graph if 
\begin{itemize}
    \item[$i$)] there is a vertex with no outgoing edges, denoted as the \emph{leader}, assigned the label $v_1$; 
    \item[$ii$)] there is a vertex with only one outgoing edge pointing to the leader, denoted as the \emph{first follower} assigned the label $v_2$;
    \item[$iii$)] every vertex other than the leader and first follower has exactly two outgoing edges;
    \item[$iv$)] for every directed edge $e_{ij}$, the label is ordered as $i>j$.
\end{itemize}
\label{def:HCLFF}
\end{definition}
An example of an LFF graph is shown in Figure \ref{fig.LFF}. Here, the leader is identified by the red node and the first follower by the dark blue node.  We denote any edge $e_{ij}$ with $i > j$ as a \emph{forward edge}.  With this notion we see LFF graphs consist only of forward edges.
\begin{figure}[!h]
        \centering
        \begin{tikzpicture}
    \Vertex[size=.5,label=$v_1$,color=red,x=0,y=0]{v1}
    \Vertex[size=.5,label=$\color{white}{v_2}$,color=blue,x=1,y=0]{v2}
    \Vertex[size=.5,label=$v_3$,x=2,y=0]{v3}
    \Vertex[size=.5,label=$v_4$,x=3,y=0]{v4}
    \Vertex[size=.5,label=$v_{n}$,x=5,y=0]{vi}
    \node at (4,0) {$\cdots$};
    \Edge[Direct](v2)(v1)
    \Edge[Direct,bend=30](v3)(v1)
    \Edge[Direct,bend=-30](v3)(v2)
    \Edge[Direct,bend=30](v4)(v2)
    \Edge[Direct,bend=-30](v4)(v3)
    \Edge[Direct,bend=30](vi)(v3)
    \Edge[Direct,bend=-30](vi)(v1)
\end{tikzpicture}
       
        \caption{An example of an LFF graph.}\label{fig.LFF}
    \end{figure}
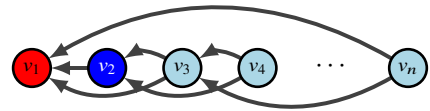

Of note for the BOFC on these LFF graphs is the resulting cascade structure of the closed-loop system.  Indeed, for LFF graphs, the dynamics become
\begin{align*}\label{dirBOFC_LFF}
\begin{cases}\dot p_1(t) &= 0 \\
\dot p_2(t) &= \textcolor{black}{-P_{g_{21}(t)}\bm{\mathrm g}_{21} }\\
\dot p_i(t) &= -\sum_{ij\in \mathcal E}P_{g_{ij}(t)} \bm{\mathrm{g}}_{ij}, \, i=3,\ldots,n
\end{cases}.
\end{align*}

The main result then from \cite{trinh2018bearing} states that if the target formation is an LFF graph, the control \eqref{eqn:law_di} solves Problem \ref{prob.formationcontrol}.  The proof relies on the stability theorem of cascade systems \textcolor{black}{\cite[Theorem 4.1]{Seibert1990}}
and the fact that the leader and first follower effectively fix the centroid and scale of the formation.

\section{Extending the LFF Structure}
\label{sec.extendLFF}

Extending the class of directed graphs that can solve Problem \ref{prob.formationcontrol} poses significant challenges.  The moment the LFF structure is broken, then the cascade system analysis from \cite{trinh2018bearing} may no longer be applied.  Nevertheless, there is an interest to find additional structures that can be used, enabling a network designer to have more flexibility to design systems with additional properties such as performance or robustness.  To emphasize this point, we refer again to the example of Figure \ref{fig:ex1_tf_2} which solves Problem \ref{prob.formationcontrol} but is not an LFF graph indicating that such structures exist. 

To begin, we focus on a simple system comprised of one follower and many leaders.  This is not an LFF graph, and the analysis of this simpler problem will provide the framework needed to extend the LFF graphs.

\textcolor{black}{\begin{remark}
    The bearing-only formation control strategies summarized in Section \ref{sec.bearingformations} implicitly assume no inter-agent collisions - otherwise the bearing vectors would be undefined.  Both \cite{zhao2015bearing} and \cite{trinh2018bearing} explore sufficient conditions that ensure no collisions will occur, or discuss the possibility to augment the control with additional collision-avoidance schemes. In this work, we do not pursue these directions further, as our focus is on the nominal convergence properties of the control law under ideal sensing conditions. 
\end{remark}}

\subsection{BOFC with 1 Follower and Many Leaders (1-to-many)}

We consider $n\geq 3$ agents modeled by the dynamics \eqref{integrator} with $n-1$ leaders and one follower. We denote the leaders by the first $n-1$ nodes, and thus the follower is node $n$.  The directed graph has edges only of the form $ni \in \mathcal E$ for $i=1,\ldots, n-1$.  An example of this structure is shown in Figure \ref{fig:sensing_1dof}.\footnote{Note that for $n=1$ the graph is trivial, and for $n=2$ the case is  studied in
\cite{trinh2018bearing},  so we do not consider them.}

\begin{figure}[!h]
    \centering
    \begin{tikzpicture}
    \Vertex[size=.6,label=$v_1$,x=-1.5,y=0]{v1}
    \Vertex[size=.6,label=$v_2$,x=-0.5,y=0]{v2}
    \Vertex[size=.6,label=$\cdots$,x=0.5,y=0]{vi}
    \Vertex[size=.6,label=$v_{n-1}$,x=1.5,y=0]{vn}
    \Vertex[size=.6,label=$v_n$,x=0,y=1]{vc}
    \Edge[Direct](vc)(v1)
    \Edge[Direct](vc)(v2)
    \Edge[Direct](vc)(vi)
    \Edge[Direct](vc)(vn)
    \end{tikzpicture}
    \caption{The directed graph for the 1-to-many setup.}
    \label{fig:sensing_1dof}
\end{figure}
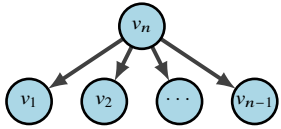

With this setup, and applying the control \eqref{undirBOFC}, the dynamics of each agent can be expressed as
\begin{equation}
\begin{aligned}
    \dot p_i(t)&=0,\, i=1,\ldots,n-1\\
    \dot p_n (t)&=-\sum_{nj\in \mathcal E}P_{g_{nj}(t)}\bm{\mathrm g}_{nj}.
    \label{sys:1DOF}
\end{aligned}
\end{equation}

We now introduce an assumption of the target formation for this setup.  
\begin{asp}
    The target bearing formation $(\mathcal{G},\bm{\mathrm{g}})$ is realizable.  Furthermore, there exists a configuration $\mathrm p \in F_B^{-1}(\bm{\mathrm g})$ such that $\mathrm p_i, \mathrm p_j$ and $\mathrm p_n$ are not collinear for any $i,j \in \{1,2,\ldots, n-1\}$.
    \label{asp:1dof_fc}
\end{asp}

Moreover, according to \eqref{sys:1DOF}, the leader nodes do not move.  Thus, the realizability question can be framed in terms of the initial conditions of the leader agents.  That is, for a configuration $\mathrm p$ satisfying Assumption \ref{asp:1dof_fc}, we must have that $p_i(0) = \mathrm p_i$ for $i=1,\ldots,n-1$. We are now prepared to present the first result which characterizes the equilibrium configuration of \eqref{sys:1DOF}.

To begin, we note that the solution to the equilibrium condition
\textcolor{black}{$$0 =  \sum_{nj\in \mathcal E}\kappa_{nj}(g_{nj}(t),\bm{\mathrm g}_{nj})=-\sum_{nj\in \mathcal E}P_{g_{nj}(t)}\bm{\mathrm g}_{nj}$$} can be expressed more naturally in terms of the bearings $\mathrm g_{nj}$.  Let the set $\mathcal X(\mathcal G,\bm{\mathrm g})$ be the set of bearings satisfying the equilibrium condition.  This set has a special structure, so we express it as the intersection of three sets,   
\begin{align}\label{Xeq_set}
\mathcal{X}(\mathcal{G},\bm{\mathrm{g}})&=\mathcal{X}_{eq}(\mathcal{G},\bm{\mathrm{g}})\cap \mathcal{C}_{1}\cap \mathcal{C}_{F_B}(\mathcal{G}).
\end{align}
The first set, $\mathcal{X}_{eq}(\mathcal{G},\bm{\mathrm{g}})$, characterizes all vectors that satisfy the equilibrium condition, i.e.,
\begin{align}\label{X_eq}
\mathcal{X}(\mathcal{G},\bm{\mathrm{g}}) = \left\{ x\in \mathbb{R}^{\mathrm d|\mathcal{E}|}: \textcolor{black}{\sum_{nj\in \mathcal E}\tilde\kappa_{ij}(x_{nj},\bm{\mathrm g}_{nj})=0}\right\},
\end{align}
\textcolor{black}{where $\tilde \kappa_{ij} : \mathbb{R}^{\mathrm d} \times \mathbb{S}^{\mathrm d -1} \to \mathbb{R}^{\mathrm d}$ is an extension of the domain of $\kappa_{ij}$ to allow non-unit vectors. The vector $x \in \mathbb{R}^{\mathrm d|\mathcal E|}$ is partitioned into $\mathrm d$-dimension blocks indexed by each edge in $\mathcal G$.} 
 Note that this set may include solutions that are \emph{not} bearing vectors (i.e., not unit-norm vectors).  The set 
\begin{align}\label{X_g}
   {\scriptsize{ \mathcal{C}_{1} = \{g={\tiny{\begin{bmatrix} g_1^T  \cdots  g^T_{|\mathcal E|}\end{bmatrix}^T}} \in \mathbb{R}^{\mathrm d |\mathcal E|} :\textcolor{black}{g_k \in \mathbb{S}^{\mathrm d-1}}, \, k=1,\ldots,|\mathcal E|\}\}}}
\end{align}
ensures all $\mathrm d$-vector entries of $\mathrm g$ are unit-norm vectors. Finally, the set 
\begin{align}\label{X_F}
   \mathcal{C}_{F_B}(\mathcal{G}) = \{g \in \mathbb{R}^{\mathrm d |\mathcal E|} \,:\, \exists \, p \in \mathbb{R}^{\mathrm d |\mathcal V|} \text{ s.t. } g = F_B(p)\},
\end{align}
considers only bearings that are realizable. In fact, it is true that $\mathcal C_{F_B(\mathcal G)} \subset \mathcal C_1$, but our analysis is aided by examining these sets separately.

Of interest now is to understand what vectors lie in the set $\mathcal X(\mathcal G,\bm{\mathrm g})$. We note that the equilibrium condition is nonlinear in the bearing $\mathrm g$, but is \emph{linear} in the target bearing $\bm{\mathrm g}$. In this direction, we may ask given a set of measured bearings $\mathrm g$, what are all possible target formations that result in an equilibrium?  This can also be characterized by the intersection of three sets,
\begin{align}\label{Yeq_set}
\mathcal{Y}(\mathcal{G},g)&=\mathcal{Y}_{eq}(\mathcal{G},g)\cap \mathcal{C}_{1}\cap \mathcal{C}_{F_B}(\mathcal{G}),
\end{align}
where
\begin{align}
\mathcal{Y}_{eq}(\mathcal{G},g)&=\left\{y\in \mathbb{R}^{\mathrm d|\mathcal{E}|}: \textcolor{black}{\sum_{nj\in \mathcal E}\hat \kappa_{nj}(g_{nj}(t),y_{nj})}=0\right\},
\end{align}
\textcolor{black}{where $\hat \kappa_{ij} : \mathbb{S}^{\mathrm d-1} \times \mathbb{R}^{\mathrm d } \to \mathbb{R}^{\mathrm d}$ is an extension of the domain of $\kappa_{ij}$. The vector $y \in \mathbb{R}^{\mathrm d|\mathcal E|}$ is partitioned into $\mathrm d$-dimension blocks indexed by each edge in $\mathcal G$.}
We now describe how the sets $\mathcal X(\mathcal G,\bm{\mathrm g})$ and $\mathcal Y(\mathcal G,{g})$ are related.
\begin{Lemma}
    Assume that $g,\bm{\mathrm g} \in \mathcal{C}_{1}\bigcap \mathcal{C}_{F_B}(\mathcal{G})$.  Then  
    \begin{itemize}
        \item[i)] $\bm{\mathrm{g}} \in \mathcal{X}(\mathcal{G},\bm{\mathrm{g}})$, and
        \item[ii)] $g \in \mathcal{Y}(\mathcal{G},g)$.
    \end{itemize}
    \label{lem:permanent solution}
\end{Lemma}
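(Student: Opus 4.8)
The plan is to observe that both claims reduce to a single elementary property of the orthogonal projection matrix, namely that $P_x$ annihilates the vector $x$ that defines it. Since the hypotheses already grant $g,\bm{\mathrm g}\in \mathcal{C}_{1}\cap\mathcal{C}_{F_B}(\mathcal{G})$, the only thing left to verify for (i) is that $\bm{\mathrm g}$ lies in the equilibrium set $\mathcal{X}_{eq}(\mathcal{G},\bm{\mathrm g})$, i.e.\ that $\kappa(\bm{\mathrm g},\bm{\mathrm g})=0$, and for (ii) that $g\in\mathcal{Y}_{eq}(\mathcal{G},g)$, i.e.\ that $\kappa(g,g)=0$. Both memberships then follow immediately from the set decompositions in \eqref{Xeq_set} and \eqref{Yeq_set}.

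First I would record the key identity. From the definition \eqref{projmat}, for any nonzero $x\in\mathbb{R}^{\mathrm d}$ one has $P_x x = x-\frac{x(x^Tx)}{\|x\|^2}=x-x=0$; geometrically $P_x$ is the orthogonal projection onto the hyperplane normal to $x$. The membership $g,\bm{\mathrm g}\in\mathcal{C}_{1}$ guarantees that every block $g_{nj},\bm{\mathrm g}_{nj}$ is unit-norm, hence nonzero, so each projection matrix is well defined and this identity applies blockwise.

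Then I would substitute into the equilibrium map $\kappa(g,\bm{\mathrm g})=-\sum_{nj\in\mathcal{E}}P_{g_{nj}}\bm{\mathrm g}_{nj}$. For (i), setting the measured bearing equal to the target bearing, each summand becomes $P_{\bm{\mathrm g}_{nj}}\bm{\mathrm g}_{nj}=0$, so $\kappa(\bm{\mathrm g},\bm{\mathrm g})=0$ and $\bm{\mathrm g}\in\mathcal{X}_{eq}(\mathcal{G},\bm{\mathrm g})$; intersecting with the two hypotheses yields $\bm{\mathrm g}\in\mathcal{X}(\mathcal{G},\bm{\mathrm g})$. For (ii), setting the target equal to the measured bearing, each summand becomes $P_{g_{nj}}g_{nj}=0$, so $\kappa(g,g)=0$ and $g\in\mathcal{Y}_{eq}(\mathcal{G},g)$, whence $g\in\mathcal{Y}(\mathcal{G},g)$.

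I do not expect a genuine obstacle here: the statement is essentially a consistency check confirming that the desired formation is itself an equilibrium of the control, and dually that any realized bearing is an equilibrium for the target it induces. The only point requiring care is the bookkeeping --- tracking which argument of $\kappa$ supplies the vector being projected and which defines the projection matrix --- since in each case it is precisely the coincidence of the two arguments that forces every summand to vanish.
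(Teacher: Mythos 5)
Your proposal is correct and follows essentially the same route as the paper: both rest on the single identity $P_x x = 0$ applied blockwise, which places $\bm{\mathrm g}$ in $\mathcal{X}_{eq}(\mathcal{G},\bm{\mathrm g})$ and $g$ in $\mathcal{Y}_{eq}(\mathcal{G},g)$, with the hypothesis $g,\bm{\mathrm g}\in\mathcal{C}_{1}\cap\mathcal{C}_{F_B}(\mathcal{G})$ supplying the remaining set memberships. Your version is merely more explicit about the bookkeeping; no substantive difference.
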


\begin{proof}
    By assumption, both formations $(\mathcal G,g)$ and $(\mathcal G,\bm{\mathrm g})$ are realizable bearing formations. From the properties of the projection matrices, it follows that $P_{g_k}g_k = 0$ and $P_{\bm{\mathrm g_k}}\bm{\mathrm g_k} = 0$ for $k=1,\ldots,|\mathcal E|$, and therefore $\bm{\mathrm g} \in \mathcal X_{eq}(\mathcal G,\bm{\mathrm g}) $ and ${g} \in \mathcal Y_{eq}(\mathcal G,{g}) $. 
\end{proof}
Lemma \ref{lem:permanent solution} shows that the sets $\mathcal{X}(\mathcal{G},\bm{\mathrm{g}})$ and $\mathcal{Y}(\mathcal{G},g)$ are always non-empty.  
We would further like to understand under what conditions does the set $\mathcal{X}(\mathcal{G},\bm{\mathrm{g}})$ only contain the target formation, i.e., that there is a single equilibrium for the dynamics \eqref{sys:1DOF}.  
\begin{Lemma}\label{lem:31}
For the system \eqref{sys:1DOF}, if $\mathcal{Y}(\mathcal{G},{\bar{\mathrm g}})=\{{\bar{\mathrm g}}\}$ for \textcolor{black}{any} bearing vector ${\bar{\mathrm g}}\in \mathcal{C}_{1}\cap \mathcal{C}_{F_B}(\mathcal{G})$, then $\mathcal{X}(\mathcal{G},\bm{\mathrm{g}})=\{\bm{\mathrm{g}}\}$ for \textcolor{black}{any} target bearing vector $\bm{\mathrm{g}}\in \mathcal{C}_{1}\cap \mathcal{C}_{F_B}(\mathcal{G})$.
\end{Lemma}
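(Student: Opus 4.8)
The plan is to exploit the reciprocity built into the definitions of $\mathcal{X}$ and $\mathcal{Y}$: both are level sets of the single map $\kappa$, differing only in which argument is frozen. Concretely, for any $g,\bm{\mathrm{g}}\in\mathcal{C}_{1}\cap\mathcal{C}_{F_B}(\mathcal{G})$ one has the equivalence $g\in\mathcal{X}(\mathcal{G},\bm{\mathrm{g}})\Leftrightarrow\kappa(g,\bm{\mathrm{g}})=0\Leftrightarrow\bm{\mathrm{g}}\in\mathcal{Y}(\mathcal{G},g)$, since the unit-norm and realizability constraints are already imposed on both entries. Thus proving $\mathcal{X}(\mathcal{G},\bm{\mathrm{g}})=\{\bm{\mathrm{g}}\}$ is equivalent to showing that $\bm{\mathrm{g}}$ is the only realizable bearing $g$ for which $\bm{\mathrm{g}}$ lies in $\mathcal{Y}(\mathcal{G},g)$. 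Membership of the diagonal point is free from Lemma \ref{lem:permanent solution}, so the entire content reduces to ruling out any off-diagonal $g\neq\bm{\mathrm{g}}$ with $\kappa(g,\bm{\mathrm{g}})=0$.

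First I would record the structural fact that $\mathcal{Y}_{eq}(\mathcal{G},g)=\Null\big(M(g)\big)$ is a linear subspace, where $M(g)=\begin{bmatrix}P_{g_{n1}} & \cdots & P_{g_{n,n-1}}\end{bmatrix}$ collects the edge projections; this is exactly the feature that $\kappa$ is linear in its second argument. Under Assumption \ref{asp:1dof_fc} no three of the points $\mathrm{p}_i,\mathrm{p}_j,\mathrm{p}_n$ are collinear, so the bearings $g_{nj}$ are pairwise non-parallel and the ranges of the $P_{g_{nj}}$ jointly span $\mathbb{R}^{\mathrm d}$; hence $\rank M(g)=\mathrm d$ and $\dim\Null(M(g))=\mathrm d(n-2)$ for every realizable $g$. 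The hypothesis $\mathcal{Y}(\mathcal{G},\bar{\mathrm g})=\{\bar{\mathrm g}\}$ then states that this fixed-dimension subspace, taken at the base point $\bar{\mathrm g}$, meets the realizable unit-bearing set $\mathcal{C}_{1}\cap\mathcal{C}_{F_B}(\mathcal{G})$ only at $\bar{\mathrm g}$ itself.

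The crux is to transfer this single-base-point uniqueness to the target. I would argue that because the kernel $\Null(M(g))$ has constant dimension $\mathrm d(n-2)$ while the realizable bearing set is the $\mathrm d$-dimensional image of the follower position under $F_B$ with the leaders held fixed, the intersection $\Null(M(g))\cap\mathcal{C}_{F_B}(\mathcal{G})$ is zero-dimensional away from degeneracies, with the diagonal point $g$ always present; the hypothesis certifies that at $\bar{\mathrm g}$ this intersection carries no extra point, and this nondegeneracy should be read as a property of the framework $(\mathcal{G},\bm{\mathrm g})$ rather than of the particular realization, so that it persists at the target configuration. Granting this, reciprocity closes the argument: if $g\in\mathcal{X}(\mathcal{G},\bm{\mathrm g})$ then $\bm{\mathrm g}\in\mathcal{Y}(\mathcal{G},g)=\{g\}$, forcing $g=\bm{\mathrm g}$, and together with $\bm{\mathrm g}\in\mathcal{X}(\mathcal{G},\bm{\mathrm g})$ from Lemma \ref{lem:permanent solution} we obtain $\mathcal{X}(\mathcal{G},\bm{\mathrm g})=\{\bm{\mathrm g}\}$. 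I expect the transfer of nondegeneracy between base points to be the main obstacle, precisely because $\mathcal{X}_{eq}$ is nonlinear in $g$ and the realizable set is only a $\mathrm d$-dimensional manifold; making the realization-independence precise, rather than merely generic, is where the real work lies, and is presumably where Assumption \ref{asp:1dof_fc} together with the constant rank of $M$ must be used in full.
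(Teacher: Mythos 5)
Your closing argument is exactly the paper's proof: if $g\in\mathcal{X}(\mathcal{G},\bm{\mathrm{g}})$ with $g\neq\bm{\mathrm{g}}$, then $\kappa(g,\bm{\mathrm{g}})=0$ puts $\bm{\mathrm{g}}$ into $\mathcal{Y}(\mathcal{G},g)$, and since $g\in\mathcal{Y}(\mathcal{G},g)$ by Lemma \ref{lem:permanent solution}, the set $\mathcal{Y}(\mathcal{G},g)$ contains two points, contradicting the singleton hypothesis; the paper phrases this as a contradiction with the spurious equilibrium named $\bar{\mathrm g}$, i.e., it invokes the hypothesis \emph{at the spurious point itself}. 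Where you diverge is in the middle of your write-up: reading the hypothesis as pinned to one fixed $\bar{\mathrm g}$, you conclude that you must ``transfer the nondegeneracy'' to the arbitrary $g\in\mathcal{X}(\mathcal{G},\bm{\mathrm{g}})$ via a constant-rank, dimension-counting argument, and you do not carry that step out (``granting this\ldots''). That transfer is neither needed nor attempted in the paper: the hypothesis is meant to be available at whatever realizable bearing vector plays the role of the second equilibrium, which is exactly what Lemma \ref{lem:1dof_realizable} later supplies by proving $\mathcal{Y}(\mathcal{G},g)=\{g\}$ for \emph{every} realizable $g$ in the 1-to-many system. Your instinct that the quantifier in the statement is loose is fair --- the lemma is really ``if $\mathcal{Y}(\mathcal{G},\bar{\mathrm g})$ is a singleton for every realizable $\bar{\mathrm g}$, then $\mathcal{X}(\mathcal{G},\bm{\mathrm{g}})$ is a singleton'' --- but the fix is quantifier bookkeeping, not geometry. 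The dimension-counting paragraph can be deleted outright; as written, it is the one place where your proof has an unfilled step, and it is a step the correct argument never requires.
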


\begin{proof}
The lemma is proven by contradiction. 
Assume that $\mathcal{X}(\mathcal{G},\bm{\mathrm{g})}) = \{\bm {\mathrm g} , {\bar{\mathrm g}}\}$ \textcolor{black}{(i.e., $\mathcal{X}(\mathcal{G},\bm{\mathrm{g})})$ is not a singleton)}, which means that 
\textcolor{black}{$$\sum_{nj \in \mathcal E} \kappa_{ij}(\bm{\mathrm g}_{nj},\bm{\mathrm g}_{nj}) = \sum_{ij \in \mathcal E} \kappa_{nj}({\bar{\mathrm g}}_{nj},\bm{\mathrm g}_{nj})=0.$$}

This then implies that $\bm{\mathrm g} \in \mathcal Y(\mathcal G,{\bar{\mathrm g}})$ and from Lemma \ref{lem:permanent solution} we have $\{\bm {\mathrm g}, {\bar{\mathrm g}}\} \subseteq \mathcal Y(\mathcal G,{\bar{\mathrm g}})$, leading to a contradiction. \textcolor{black}{Note that since we are considering vectors in $\mathcal{C}_{1}\cap \mathcal{C}_{F_B}(\mathcal{G})$, the maps $\tilde \kappa_{ij}$ and $\hat \kappa_{ij}$ are identical to the map $\kappa_{ij}$.}
\end{proof}

Lemmas \ref{lem:permanent solution} and \ref{lem:31} show that if $\mathcal{Y}(\mathcal{G},g)$ is a singleton, than so must be $\mathcal{X}(\mathcal{G},\bm{\mathrm{g})})$. \textcolor{black}{The first step is to} derive conditions that guarantee that this in fact happens \textcolor{black}{(i.e., the sets are singletons)}. \textcolor{black}{In this direction, we derive} an explicit characterization of the sets $\mathcal Y_{eq}(\mathcal G,g)$, $\mathcal Y_{eq}(\mathcal G,g) \cap \mathcal C_{1}$, and finally $\mathcal Y(\mathcal G,g)$.

We start by finding the elements in $\mathcal{Y}_{eq}(\mathcal{G},g)$. Define $\Tilde{P}$ as 
$$\tilde{P}=\begin{bmatrix} P_{g_{n1}} & \cdots & P_{g_{n(n-1)}} \end{bmatrix}\in \mathbb{R}^{\mathrm d \times \mathrm d (n-1)}.$$ 
Then, the equilibrium condition for the follower node $n$ in \eqref{sys:1DOF} can be expressed as
\begin{equation}
    0=-\sum_{nj \in \mathcal E} P_{g_{nj}}{\bm{\mathrm{g}}_{nj}}=-\tilde{P} {\bm{\mathrm{g}}}.
\end{equation}
It then follows that $\mathcal{Y}_{eq}(\mathcal{G},g) = \Null(\Tilde{P})$.  For formations in $\mathbb{R}^{\mathrm dn}$, it follows that $\mathrm{rk} \tilde P = \mathrm d$, and the dimension of its null-space is therefore $\mathrm d(n-2)$.  \textcolor{black}{This is due to Assumption \ref{asp:1dof_fc} which ensures that equilibrium configurations do not have any collinear bearings.}

Let $G = \mathrm{diag}\{g_{ni}\}_{i=1}^{n-1} \in \mathbb{R}^{\mathrm d(n-1) \times n-1}$ and $G^\perp = \mathrm{diag}\{g_{ni}^\perp\}_{i=1}^{n-1} \in \mathbb{R}^{\mathrm d(n-1) \times (\mathrm d-1)(n-1)}$, where $g_{ni}^\perp \in \mathbb{R}^{\mathrm d \times \mathrm d-1}$ is defined such that $\range(g_{ni}^\perp) = \Null (g_{ni}^T(t))$. It then follows that $\range(G)$ and $\range(G^\perp)$ are orthogonal subspaces, and that $\range(G)\oplus\range(G^\perp) = \mathbb{R}^{\mathrm d (n-1)}$.

The following lemma relates properties of $G$ and $G^\perp$  to the matrix  $\tilde{P}$.
\begin{Lemma}\label{lem:G_Gp}
For $G$ and $G^\perp$ defined above, the following hold:
\begin{itemize}
    \item[i)]$\Tilde{P} G=\vmathbb{0}_{\mathrm d\times n-1}$;
    \item[ii)]$\Tilde{P} G^\perp  = \begin{bmatrix}g_{n1}^\perp & \cdots & g_{n(n-1)}^\perp\end{bmatrix}\in \mathbb{R}^{\mathrm d \times (\mathrm d-1)(n-1)}$.
\end{itemize}

\end{Lemma}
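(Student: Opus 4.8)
The plan is to exploit the block structure of $G$ and $G^\perp$ so that the products with $\tilde{P}$ decouple into $n-1$ small blockwise computations, each governed by a single projection identity. Since $\tilde{P} = \begin{bmatrix} P_{g_{n1}} & \cdots & P_{g_{n(n-1)}}\end{bmatrix}$ is a row of blocks while both $G$ and $G^\perp$ are block diagonal, multiplying on the right isolates one diagonal block at a time, and the whole statement reduces to two elementary facts about the projection $P_{g_{nj}}$.

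For part i), I would note that the $j$-th column of $G$ is the vector carrying $g_{nj}$ in its $j$-th $\mathrm d$-block and zeros elsewhere, so that $\tilde{P}$ applied to this column returns exactly $P_{g_{nj}} g_{nj}$. The crux is then the identity $P_{g_{nj}} g_{nj} = (I_{\mathrm d} - g_{nj} g_{nj}^T) g_{nj} = g_{nj} - g_{nj}\|g_{nj}\|^2 = 0$, where $\|g_{nj}\| = 1$ because $g \in \mathcal{C}_{1}$. Since this holds for every $j$, every column of $\tilde{P} G$ vanishes and hence $\tilde{P} G = \vmathbb{0}_{\mathrm d \times n-1}$.

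For part ii), the argument is analogous: the columns of $G^\perp$ belonging to the $j$-th diagonal block place the $\mathrm d \times (\mathrm d-1)$ matrix $g_{nj}^\perp$ into the $j$-th row-block, so $\tilde{P}$ applied to that group of columns returns $P_{g_{nj}} g_{nj}^\perp$. Here the relevant fact is the complementary identity $P_{g_{nj}} g_{nj}^\perp = g_{nj}^\perp$, which holds because every column of $g_{nj}^\perp$ lies in $\Null(g_{nj}^T)$ by definition of $g_{nj}^\perp$, and $P_{g_{nj}}$ acts as the identity on the orthogonal complement of $g_{nj}$. Concatenating the resulting blocks over $j$ gives $\tilde{P} G^\perp = \begin{bmatrix} g_{n1}^\perp & \cdots & g_{n(n-1)}^\perp\end{bmatrix}$.

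There is no substantive obstacle in this lemma; it is a direct computation, and I expect the only care needed is bookkeeping the block indices so that the $j$-th block of $\tilde{P}$ is paired with the $j$-th block of $G$ and $G^\perp$. It is worth recording explicitly that both parts are special cases of the single relation $P_{g_{nj}} v = v - g_{nj}(g_{nj}^T v)$, which annihilates $v = g_{nj}$ and fixes any $v$ orthogonal to $g_{nj}$. These two complementary behaviours are precisely what make $\range(G)$ and $\range(G^\perp)$ the natural coordinates for describing $\Null(\tilde{P})$, which is presumably how the lemma will be used in the subsequent characterization of $\mathcal{Y}_{eq}(\mathcal{G},g)$.
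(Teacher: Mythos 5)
Your proposal is correct and follows essentially the same route as the paper's proof: a direct blockwise computation reducing both parts to the identities $P_{g_{nj}} g_{nj} = 0$ and $P_{g_{nj}} g_{nj}^\perp = g_{nj}^\perp$. You simply spell out the unit-norm and orthogonality justifications that the paper leaves implicit.
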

\begin{proof}
The proof follows by direct construction.  For part $i$), we have 
\begin{align*}
    \tilde{P} G&= \begin{bmatrix}
        P_{g_{n1}}g_{n1} & \cdots & P_{g_{n(n-1)}}g_{n(n-1)}
    \end{bmatrix}=\vmathbb{0}_{\mathrm d\times n-1}. 
\end{align*}
Similarly, for $ii$) we have $P_{g_{ni}} \mathrm  g_{ni}^\perp=\mathrm 
 g_{ni}^\perp$ and the result follows directly.  
\end{proof}

Lemma \ref{lem:G_Gp} shows that $\range(G)\subset \Null(\Tilde{P})$. The columns of $G$ therefore can be used to  determine $n-1$ basis vectors of $\Null(\tilde{P})$, while there are $ m=\mathrm d(n-2)-n+1$ basis vectors left to be determined. These basis vectors should be orthogonal to $G$, which can be expressed by the linear combination of {{the columns of}} $G^\perp$. 

In this direction, define $N \in \mathbb{R}^{(\mathrm d-1)(n-1) \times m}$ such that $\range(N)=\Null(\tilde P G^\perp)$, 
i.e., $\Tilde{P} G^\perp N=\vmathbb{0}_{\mathrm d \times m}$, from which it follows that $\range(G^\perp N) \subset \Null(\tilde{P})$. Thus, we conclude that $$\Null(\tilde{P})=\range(G)\oplus \range(G^\perp N).$$

We are now prepared to express $\mathcal{Y}_{eq}(\mathcal{G},g)$ in terms of linear combinations of the columns of $G$ and $G^\perp N$, 
{\small\begin{equation*}
    \mathcal{Y}_{eq}(\mathcal{G},g)=\{y\in \mathbb{R}^{\mathrm d|\mathcal{E}|}:  y =G a+G^\perp N b, \, \forall \, a\in\mathbb{R}^{n-1}, \, b\in\mathbb{R}^{m} \}.
    \label{eqn:solution form_m}
\end{equation*}}
%
%
Equivalently, we can express vectors $y \in \mathcal{Y}_{eq}(\mathcal{G},g)$ in terms of its components as
%
\begin{equation}
    y_i=a_i g_{ni}+g_{ni}^\perp\left(\sum_{j=1}^{m}b_jn_{ij}\right),
    \label{eqn:solution form}
\end{equation}
where $n_{ij}\in \mathbb{R}^{\mathrm d-1}$ and
$$N = \begin{bmatrix}n_{11} & \cdots & n_{1m} \\ \vdots & \ddots & \vdots \\ n_{(n-1)1} & \cdots & n_{(n-1)m} \end{bmatrix}.$$

We now use the characterization in \eqref{eqn:solution form} to find all solutions that are also in $\mathcal C_{1}$.  In particular, we must have that for each $i$,
\begin{align*}
    \|y_i\|^2 & = \left(a_i g_{ni}+g_{ni}^\perp\left(\sum_{j=1}^{m}b_jn_{ij}\right)\right)^T\left(a_i g_{ni}+g_{ni}^\perp\left(\sum_{j=1}^{m}b_jn_{ij}\right)\right) \\
    &= a_i^2 +  \left\|\sum_{j=1}^{m}b_jn_{ij}\right\|^2 = 1.
\end{align*}
This holds since $g_{ni}^Tg_{ni}^\perp=0$ and $(g_{ni}^\perp)^Tg_{ni}^\perp = I_{\mathrm d-1}$.


%
Finally, we can consider the realizable vectors described by the form above.
\begin{Lemma}
    In the 1-to-many system, the bearing vector $\mathrm{g} \in \mathcal{Y}_{eq}(\mathcal{G},g)\cap \mathcal{C}_{1}$ is realizable if and only if $a=\vmathbb{1}_{n-1},\, b=\vmathbb{0}_{m}$. Equivalently, $\mathcal{Y}(\mathcal{G},g)=\{g\}$. 
    %
    \label{lem:1dof_realizable}
\end{Lemma}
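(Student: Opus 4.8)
The plan is to prove the two directions of the biconditional separately and then read off the ``Equivalently'' statement. Sufficiency is immediate: substituting $a=\vmathbb{1}_{n-1},\,b=\vmathbb{0}_m$ into \eqref{eqn:solution form} gives $y_i=g_{ni}$, which is the bearing vector of the original configuration (leaders fixed at $p_i$, follower at $p_n$) and is therefore realizable. All the content lies in necessity: a realizable $y\in\mathcal{Y}_{eq}(\mathcal{G},g)\cap\mathcal{C}_{1}$ must take this form. Because the leaders do not move, realizability means there is a \emph{single} follower position $q_n$ with $y_i=(p_i-q_n)/\|p_i-q_n\|$ for every $i$. Writing $\lambda_i=\|p_i-q_n\|>0$ and $d_{ni}=\|p_i-p_n\|>0$, and subtracting $p_i-p_n=d_{ni}g_{ni}$ from $p_i-q_n=\lambda_i y_i$, I obtain the key relation $\lambda_i y_i=d_{ni}g_{ni}+w$ for all $i$, where $w:=p_n-q_n$ is a \emph{common} offset vector independent of $i$.

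Next I would extract component equations by projecting this relation onto $g_{ni}$ and onto $\range(g_{ni}^\perp)$. Using $g_{ni}^Tg_{ni}=1$, $g_{ni}^Tg_{ni}^\perp=0$, and $(g_{ni}^\perp)^Tg_{ni}^\perp=I_{\mathrm d-1}$ together with the decomposition $y_i=a_ig_{ni}+g_{ni}^\perp c_i$ (with $c_i=\sum_j b_j n_{ij}$) from \eqref{eqn:solution form}, this yields $\lambda_i a_i=d_{ni}+g_{ni}^Tw$ and $\lambda_i c_i=(g_{ni}^\perp)^Tw$ for each $i$.

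The crux is to show $w=0$, and this is the step I expect to be the main obstacle: not the algebra itself, but recognizing that realizability together with the equilibrium condition jointly pin down $w$ through a positive-semidefinite quadratic form. Here I invoke $y\in\mathcal{Y}_{eq}(\mathcal{G},g)=\Null(\tilde P)$; since $P_{g_{ni}}g_{ni}=0$ and $P_{g_{ni}}g_{ni}^\perp=g_{ni}^\perp$, the condition $\tilde P y=0$ collapses to $\sum_i g_{ni}^\perp c_i=0$. Substituting $c_i=\tfrac{1}{\lambda_i}(g_{ni}^\perp)^Tw$ and using $g_{ni}^\perp(g_{ni}^\perp)^T=P_{g_{ni}}$ turns this into $Mw=0$ with $M=\sum_i\tfrac{1}{\lambda_i}P_{g_{ni}}$. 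As each $P_{g_{ni}}\succeq0$ and $\lambda_i>0$, the matrix $M$ is positive semidefinite, so $Mw=0$ forces $w^TMw=\sum_i\tfrac{1}{\lambda_i}\|P_{g_{ni}}w\|^2=0$, hence $P_{g_{ni}}w=0$ for every $i$, i.e. $w$ is parallel to every bearing $g_{ni}$. By Assumption \ref{asp:1dof_fc} the points $p_i,p_j,p_n$ are non-collinear, so $g_{n1}$ and $g_{n2}$ are linearly independent and $\Span(g_{n1})\cap\Span(g_{n2})=\{0\}$, giving $w=0$.

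Finally, with $w=0$ the component equations simplify: $\lambda_i c_i=0$ gives $c_i=0$ (as $\lambda_i>0$), while $\lambda_i a_i=d_{ni}>0$ gives $a_i>0$, so the unit-norm constraint $a_i^2+\|c_i\|^2=1$ forces $a_i=1$. Stacking over $i$, the relation $c=Nb=0$ with $N$ of full column rank yields $b=\vmathbb{0}_m$, and $a=\vmathbb{1}_{n-1}$. Thus $y=g$, which is precisely the assertion $\mathcal{Y}(\mathcal{G},g)=\{g\}$.
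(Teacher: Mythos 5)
Your proof is correct and follows essentially the same route as the paper's: both decompose the candidate bearing along $g_{ni}$ and $g_{ni}^\perp$, introduce the displacement between the two follower positions ($w=p_n-q_n$ in your notation, $z=\bar p_n - p_n$ in the paper's), and exploit the identity $\sum_i g_{ni}^\perp c_i = 0$ coming from $\tilde P G^\perp N = 0$ together with a sum-of-nonnegative-terms argument to kill the perpendicular components. Your packaging of that cancellation as $w^T M w = 0$ for the positive-semidefinite $M=\sum_i \lambda_i^{-1}P_{g_{ni}}$, and your explicit use of non-collinearity to conclude $w=0$ before reading off $a=\vmathbb{1}_{n-1}$, $b=\vmathbb{0}_m$, is just a slightly tidier ordering of the same computation.
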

\begin{proof}
    ($\Leftarrow$) For $a=\vmathbb{1}_{n-1},\, b=\vmathbb{0}_{m}$ we have $g=Ga+G^\perp Nb= g$. The vector $g$ corresponds to a bearing measurement, it must be realizable.

    ($\Rightarrow$) We prove this by contradiction.  Assume there exists an $a \neq \vmathbb{1}_{n-1}$ and $b \neq \vmathbb{0}_{m} $ such that the bearing vector $\bar {g} = Ga+G^\perp N b \in \mathcal C_1 \cap \mathcal C_{F_B}$. Therefore, we have that $\{\bar {g}, g\} \subseteq \mathcal Y(\mathcal G,g)$. Let $\bar{p}$ and $p$ be such that $\bar {g} = F_B(\bar{p})$ and $ {g} = F_B(p)$.
    
    Since the leader positions are fixed, we can consider the displacement between the follower agent of the two solutions described above.  Therefore, let $z=\bar p_n-p_n$, and for each $i=1,\ldots,n-1$,
    %
    %
%
\begin{equation}
    \begin{aligned}
        z_i&=\bar p_n-p_i+p_i-p_n\\
        &=\bar d_{ni} \bar {g}_{ni}-d_{ni}{g}_{ni}\\
        &=(\bar d_{ni}a_i-d_{ni})g_{ni}+g_{ni}^\perp\left(\sum_{j=1}^{m}b_jn_{ij}\bar d_{ni}\right), 
    \end{aligned}
\end{equation}
where $d_{ni}=\|p_n-p_i\|$ and $\bar d_{{n}i}=\|\bar{p}_n-p_i\|$. 
Multiplying on the left by $z_i^T$ of $g_{ni}^\perp\left(\sum_{j=1}^{m}b_jn_{ij}\right)$ gives
\begin{equation}
    \begin{aligned}
    &z_i^T g_{ni}^\perp\left(\sum_{j=1}^{m}b_jn_{ij}\right)\\
    = & \left((\bar d_{ni}a_i-d_{{n}i})g_{ni}^T+\bar d_{ni}\left(\sum_{j=1}^{m}b_jn_{ij}\right)^T(g_{ni}^\perp)^T\right)g_{ni}^\perp\left(\sum_{j=1}^{m}b_jn_{ij}\right)\\
    &= \bar d_{ni}\left(\sum_{j=1}^{m}b_jn_{ij}\right)^T\left(\sum_{j=1}^{m}b_jn_{ij}\right)
    \geq  0.
    \end{aligned}
\label{ieq:zg>=0}
\end{equation}

On the other hand, the following equation always holds, 
\begin{equation}
    \begin{aligned}
        (\vmathbb{1}_n^T \otimes I_{\mathrm d}) G^\perp Nb =\vmathbb{0}_{\mathrm d}
        \Leftrightarrow \sum_{i=1}^{n-1} \left(g_{ni}^\perp\sum_{j=1}^{m}b_jn_{ij}\right) &= \vmathbb{0}_{\mathrm d},
    \end{aligned}
    \label{eqn:sum0}
\end{equation}
since $(\vmathbb{1}_n^T \otimes I_{\mathrm d}) G^\perp=\Tilde{P}G^\perp=[g_{n1}^\perp,\cdots,g_{n(n-1)}^\perp]$. Any vector multiplied by zero is zero, which leads to

\begin{equation}
    \begin{aligned}
        z_i^T  \sum_{i=1}^{n-1} \left(g_{ni}^\perp \sum_{j=1}^{m}b_jn_{ij}\right)  = \sum_{i=1}^{n-1}\left(z_i^Tg_{ni}^\perp \sum_{j=1}^{m}b_jn_{ij}\right) &= 0.
    \end{aligned}
    \label{eqn:sum1}
\end{equation}
 From \eqref{ieq:zg>=0} and \eqref{eqn:sum1}, it can concluded that $z_i^Tg_{ni}^\perp \sum_{j=1}^{m}b_jn_{ij}\geq 0$ for all terms with $i=1,\cdots,n-1$ and their sum equals to zero. Thus, every single term should be exactly zero, which means $(\bar d_{ni}a_i-d_{{n}i})g_{ni}+g_{ni}^\perp\left(\sum_{j=1}^{m}b_jn_{ij}\bar d_{ni}\right)=0$. This equation then directly leads to $a_i=1,\forall i=1,\ldots,n-1$ and $b_j=0,\forall j=1,\ldots,m$. 
\end{proof}
Lemma \ref{lem:1dof_realizable} shows that $\mathcal{Y}(\mathcal{G},g)=\{g\}$. It then follows from Lemma $\ref{lem:31}$ that $\mathcal{X}(\mathcal{G},\bm{\mathrm{g}})=\{\bm{\mathrm{g}}\}$. 
We now show that the equilibrium position of the follower agent, $\mathrm p_n$ can be uniquely determined from the initial conditions (target positions) of the leaders and the bearing measurements.
\begin{proposition}
    Let Assumption \ref{asp:1dof_fc} hold for the target bearing formation $(\mathcal G,\bm{\mathrm g})$ and assume that $p_i(0)=\mathrm p_i$ for a configuration $\mathrm p \in F_B^{-1}(\bm{\mathrm g})$. Then 
    \begin{align}\label{1tomany_eq}
    \mathrm{p}_n=\left(\sum_{nj\in\mathcal{E}} P_{\bm{\mathrm g}_{nj}}\right)^{-1}\left(\sum_{nj\in\mathcal{E}}  P_{\bm{\mathrm g}_{nj}}\mathrm p_j\right)
    \end{align}
    is an equilibrium of \eqref{sys:1DOF}.
    \label{lem:tp_1dof}
\end{proposition}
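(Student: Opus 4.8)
The plan is to establish \eqref{1tomany_eq} in two stages: first show that the matrix $M := \sum_{nj\in\mathcal E} P_{\bm{\mathrm g}_{nj}}$ appearing in the statement is invertible, and then verify that the configuration whose follower position is given by \eqref{1tomany_eq} is in fact an equilibrium of \eqref{sys:1DOF}.

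For the invertibility step, I would write each projection (for the unit-norm target bearings) as $P_{\bm{\mathrm g}_{nj}} = I_{\mathrm d} - \bm{\mathrm g}_{nj}\bm{\mathrm g}_{nj}^T$, which is symmetric positive semidefinite; hence $M$ is symmetric positive semidefinite as a sum of such matrices. Its kernel is $\Null(M) = \bigcap_{j} \Null(P_{\bm{\mathrm g}_{nj}}) = \bigcap_{j}\Span\{\bm{\mathrm g}_{nj}\}$, which is nonzero only when all the bearings $\bm{\mathrm g}_{nj}$ are mutually parallel. Assumption \ref{asp:1dof_fc} rules this out: the non-collinearity of $\mathrm p_i, \mathrm p_j, \mathrm p_n$ guarantees that $\bm{\mathrm g}_{ni}$ and $\bm{\mathrm g}_{nj}$ are not parallel for at least one pair, so $\Null(M) = \{0\}$ and $M\succ 0$ is invertible. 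This is precisely where $n\geq 3$ enters, since it ensures at least two distinct leader bearings exist to compare.

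For the formula itself, I would exploit that $\mathrm p \in F_B^{-1}(\bm{\mathrm g})$ means $\bm{\mathrm g}_{nj} = (\mathrm p_j - \mathrm p_n)/\|\mathrm p_j - \mathrm p_n\|$, so that $\mathrm p_j - \mathrm p_n = \mathrm d_{nj}\bm{\mathrm g}_{nj}$ with $\mathrm d_{nj} = \|\mathrm p_j - \mathrm p_n\|$. Applying $P_{\bm{\mathrm g}_{nj}}$ and using $P_{\bm{\mathrm g}_{nj}}\bm{\mathrm g}_{nj} = 0$ gives $P_{\bm{\mathrm g}_{nj}}(\mathrm p_j - \mathrm p_n) = 0$, i.e. $P_{\bm{\mathrm g}_{nj}}\mathrm p_j = P_{\bm{\mathrm g}_{nj}}\mathrm p_n$ for each edge. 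Summing over $nj\in\mathcal E$ and factoring out $\mathrm p_n$ yields $\sum_{nj} P_{\bm{\mathrm g}_{nj}}\mathrm p_j = M\,\mathrm p_n$, and left-multiplying by $M^{-1}$ produces exactly \eqref{1tomany_eq}.

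Finally, I would confirm the equilibrium property directly from \eqref{sys:1DOF}: the leaders satisfy $\dot p_i = 0$ by construction, and at the configuration $\mathrm p$ the measured bearings coincide with the targets, so $\dot p_n = -\sum_{nj} P_{g_{nj}}\bm{\mathrm g}_{nj} = -\sum_{nj} P_{\bm{\mathrm g}_{nj}}\bm{\mathrm g}_{nj} = 0$. If uniqueness of this equilibrium is also desired, it is inherited from $\mathcal X(\mathcal G,\bm{\mathrm g}) = \{\bm{\mathrm g}\}$, which follows from Lemma \ref{lem:1dof_realizable} and Lemma \ref{lem:31}. I expect the invertibility argument to be the main obstacle, since it requires carefully translating the \emph{geometric} non-collinearity in Assumption \ref{asp:1dof_fc} into the \emph{algebraic} statement $\Null(M) = \{0\}$, and in particular confirming that the assumption supplies at least one pair of non-parallel bearings.
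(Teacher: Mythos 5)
Your proposal is correct and follows essentially the same route as the paper: derive $P_{\bm{\mathrm g}_{nj}}(\mathrm p_j - \mathrm p_n)=0$ from realizability, sum over edges, and invert the sum of projections, whose invertibility the paper delegates to Lemma \ref{lem.proj} using the same kernel-intersection argument for positive semidefinite matrices that you give inline. Your explicit final check that $\dot p_n=0$ at the configuration $\mathrm p$ is a small, harmless addition beyond what the paper writes out.
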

Proposition \ref{lem:tp_1dof} translates the equilibrium from conditions on the bearing measurements to the position of agent $n$. Before proving it, we present a useful lemma related to the properties of the projection matrix.
\begin{Lemma}\label{lem.proj}
Let $x,y \in \mathbb{R}^{\mathrm d}$ be two non-parallel vectors.  Then $P_x+P_y$ is invertible.
\end{Lemma}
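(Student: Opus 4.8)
The plan is to exploit the fact that each of $P_x$ and $P_y$ is a symmetric positive semidefinite matrix, so that their sum is positive semidefinite, and then to show that its kernel is trivial. First I would recall that $P_x = I_{\mathrm d} - \frac{x x^T}{\|x\|^2}$ is the orthogonal projector onto the hyperplane $x^\perp$; in particular it is idempotent and symmetric, $P_x = P_x^T = P_x^2$. Consequently $v^T P_x v = v^T P_x^T P_x v = \|P_x v\|^2 \geq 0$ for every $v \in \mathbb{R}^{\mathrm d}$, with equality if and only if $P_x v = 0$, i.e. $v \in \Null(P_x) = \Span(x)$. The identical statement holds for $P_y$.

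Next I would take an arbitrary $v \in \Null(P_x + P_y)$, so that $(P_x + P_y)v = 0$, and multiply on the left by $v^T$ to obtain
\begin{equation*}
0 = v^T (P_x + P_y) v = \|P_x v\|^2 + \|P_y v\|^2.
\end{equation*}
Since both summands are nonnegative, each must vanish, giving $P_x v = 0$ and $P_y v = 0$ simultaneously. By the kernel characterization above this forces $v \in \Span(x) \cap \Span(y)$.

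Finally, I would invoke the hypothesis that $x$ and $y$ are non-parallel, which is exactly the statement that $\Span(x) \cap \Span(y) = \{0\}$, and conclude $v = 0$. Hence $\Null(P_x + P_y) = \{0\}$, and since $P_x + P_y$ is a square (symmetric) matrix a trivial kernel is equivalent to invertibility.

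The argument is essentially routine, and the only step that deserves attention is the passage from the scalar identity $v^T(P_x + P_y)v = 0$ to the two separate conditions $P_x v = 0$ and $P_y v = 0$; this is precisely where the sum-of-squares decomposition coming from the symmetry and idempotency of the projectors is needed, since for general matrices a vanishing quadratic form would not imply that each term acts trivially. Everything else follows directly from the non-parallel hypothesis.
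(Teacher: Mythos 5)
Your proof is correct and follows essentially the same route as the paper: both reduce invertibility to showing $\Null(P_x+P_y)=\Null(P_x)\cap\Null(P_y)=\Span\{x\}\cap\Span\{y\}=\{0\}$ using positive semidefiniteness of the projectors. The only difference is that you explicitly derive the kernel-intersection property via the sum-of-squares identity $v^T(P_x+P_y)v=\|P_xv\|^2+\|P_yv\|^2$, whereas the paper cites it as a known fact about positive semidefinite matrices.
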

\begin{proof}
    From the definition of the projection matrix \eqref{projmat}, it follows that $\Null(P_x)=\Span\{x\}$ and $ \Null(P_y)=\Span\{y\}$. Since $x$ is not parallel with $y$, the subspace 
    $$\Null(P_x)\cap \Null(P_y)=\Span\{x\}\cap \Span\{y\}=\{0\}.$$ 
    In addition, the projection matrix is  positive semi-definite. {The kernel  of two positive semi-definite matrices is the intersection of the kernel space of these two matrices} (i.e., $\Null(P_x+P_y)=\Null(P_x)\bigcap \Null(P_y)$). Thus, the space $\Null(P_x+P_y)=\{0\}$, implying that $P_x+P_y$ is invertible.
\end{proof}

\begin{proof}[Proof of Proposition \ref{lem:tp_1dof}]
For the configuration $\mathrm p$, if $F_B(\mathrm p)=\bm{\mathrm g}$, it always holds that
        
    $$ P_{\bm{\mathrm g_{nj}}} (\mathrm p_n - \mathrm p_j)=0,\forall nj\in\mathcal{E}.$$
Furthermore, the sum
    $$\sum_{nj\in \mathcal E}P_{\bm{\mathrm g_{nj}}}(\mathrm p_n - \mathrm p_j)=0 $$
    must also hold.
    Rearranging terms leads to
    $$\left(\sum_{nj\in\mathcal{E}}P_{\bm{\mathrm g_{nj}}}\right)\mathrm p_n = \sum_{nj\in\mathcal{E}} P_{\bm{\mathrm g_{nj}}}\mathrm p_j.$$    
    Assumption \ref{asp:1dof_fc} requires $\bm{\mathrm{g}}_{ni}\neq \bm{\mathrm{g}}_{nj}\ \forall   i,j=1,\ldots,n-1$, and it follows from Lemma \ref{lem.proj} that $\left(\sum_{nj\in\mathcal{E}} P_{\bm{\mathrm g_{nj}}}\right)$ is invertible. As a result, the target position $\mathrm{p}_n$ in \eqref{1tomany_eq} holds.
\end{proof}

The last step is to determine the stability of equilibrium. 

\begin{theorem}\label{thm:1-to-many}
Let Assumption \ref{asp:1dof_fc} hold for some target bearing configuration $(\mathcal G,\bm{\mathrm g})$ and assume that $p_i(0)=\mathrm p_i$ for a $\mathrm p \in F_B^{-1}(\bm{\mathrm g})$. Then the point $\mathrm p_n$ defined in \eqref{1tomany_eq} is a globally exponentially stable equilibrium for the dynamics in \eqref{sys:1DOF}.
\end{theorem}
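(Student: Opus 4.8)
The plan is to exploit that the leaders are stationary, so the only nontrivial dynamics are those of $p_n$, and to rewrite the closed-loop follower dynamics \eqref{sys:1DOF} as a \emph{linear} time-varying system in the error coordinate $\xi(t) = p_n(t) - \mathrm p_n$, where $\mathrm p_n$ is the equilibrium from \eqref{1tomany_eq}. The key observation is an identity: since $p_j = \mathrm p_j$ is fixed, the current bearing satisfies $P_{g_{nj}}(\mathrm p_j - p_n) = 0$ because $\mathrm p_j - p_n$ is parallel to $g_{nj}$. Writing $\mathrm p_j - \mathrm p_n = (\mathrm p_j - p_n) + \xi$ together with $\bm{\mathrm g}_{nj} = (\mathrm p_j - \mathrm p_n)/\bm d_{nj}$, where $\bm d_{nj} := \|\mathrm p_j - \mathrm p_n\|$ is the constant target distance, then gives
$$P_{g_{nj}(t)}\bm{\mathrm g}_{nj} = \frac{1}{\bm d_{nj}} P_{g_{nj}(t)}\,\xi(t).$$
Substituting into \eqref{sys:1DOF} yields $\dot\xi = -M(t)\xi$, where $M(t) = \sum_{nj\in\mathcal E}\frac{1}{\bm d_{nj}}P_{g_{nj}(t)}$ is symmetric positive semidefinite.

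With the candidate Lyapunov function $V = \tfrac12\|\xi\|^2$, I would compute $\dot V = -\xi^T M(t)\xi \le 0$, so that $V$ is nonincreasing, the sublevel set $\{\,p : \|p - \mathrm p_n\| \le \|\xi(0)\|\,\}$ is invariant, and the trajectory remains in a compact set on which all bearings and $M(t)$ stay well defined. The central remaining step is to establish a uniform lower bound $M(t)\succeq c\, I_{\mathrm d}$ with $c>0$. I would argue this from Lemma \ref{lem.proj}: $M(t)$ fails to be positive definite only when all current bearings $g_{nj}(t)$ are mutually parallel, that is, when $p_n(t)$ is collinear with every leader. Assumption \ref{asp:1dof_fc} rules out this degeneracy at the equilibrium, where at least two target bearings are non-parallel so that $P_{\bm{\mathrm g}_{ni}} + P_{\bm{\mathrm g}_{nj}}\succ 0$ by Lemma \ref{lem.proj}. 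Combining the invariance of the compact sublevel set with continuity of $M(\cdot)$ then yields the uniform constant $c$.

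Given $M(t)\succeq c\, I_{\mathrm d}$, one obtains $\dot V \le -2cV$, hence $\|\xi(t)\| \le \|\xi(0)\|\,e^{-ct}$, so that $p_n(t)\to\mathrm p_n$ globally and exponentially. The uniqueness of this limit is already guaranteed by Lemmas \ref{lem:1dof_realizable} and \ref{lem:31}, which establish $\mathcal X(\mathcal G,\bm{\mathrm g}) = \{\bm{\mathrm g}\}$, and Proposition \ref{lem:tp_1dof} identifies the corresponding position as \eqref{1tomany_eq}.

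I expect the main obstacle to be making the uniform bound $c$ rigorous over the entire (possibly large) invariant sublevel set rather than only in a neighborhood of the equilibrium: one must verify that the trajectory cannot asymptotically approach the degenerate collinear set where $\lambda_{\min}(M(t))\to 0$. I would handle this by showing that the follower vector field is transverse to the collinear set, pushing $p_n$ back toward the non-degenerate region, so that at least two bearings remain uniformly non-parallel along the trajectory; this secures the strictly positive $c$ needed for the global exponential rate.
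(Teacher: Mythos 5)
Your proposal follows essentially the same route as the paper's proof: the identity $P_{g_{nj}(t)}\bm{\mathrm g}_{nj}=\tfrac{1}{\bm d_{nj}}P_{g_{nj}(t)}\,\xi$ reducing the follower dynamics to $\dot\xi=-M(t)\xi$, the Lyapunov function $V=\tfrac12\|\xi\|^2$, and Lemma \ref{lem.proj} to rule out degeneracy of $M(t)$ off the collinear set. If anything you are slightly more careful on the two delicate points --- you correctly use the constant target distance in the denominator where the paper writes the time-varying $d_{nj}(t)$, and you explicitly flag the uniform lower bound $\inf_t\lambda_{\min}(M(t))>0$ that the paper simply asserts --- though your sketched transversality argument for that bound is left unexecuted, so your write-up is at essentially the same level of rigor as the paper's.
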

\begingroup
\color{black}
\begin{proof}
Consider the Lyapunov function $V(p_n(t))=\frac{1}{2}\|p_n(t)-\mathrm p_n\|^2$. Then
 \begin{align}\label{eqn:vdot_pt1}
    \dot V(p_n(t))&=(p_n(t)-\mathrm p_n)^T \dot p_n(t) \nonumber\\
   &=-\sum_{nj\in\mathcal{E}} (p_n(t)-\mathrm p_n)^T P_{g_{nj}(t)}\mathbf{g}_{nj}\nonumber\\
   &=-\sum_{nj\in\mathcal{E}} (p_n(t)-\mathrm p_j+\mathrm p_j-\mathrm p_n)^T P_{g_{nj}(t)}\mathbf{g}_{nj}\nonumber\\
   &=-\sum_{nj\in\mathcal{E}} \mathbf{d}_{nj} \mathbf{g}_{nj}^T P_{g_{nj}(t)}\mathbf{g}_{nj}  \leq 0.
   \end{align}
    In above, $\mathbf{d}_{nj} = \|\mathrm p_j-\mathrm p_n\| > 0$ (by Assumption \ref{asp:1dof_fc}), and we recall that $P_{g_{nj}(t)}(p_n(t)-\mathrm p_j) = 0$. Therefore, $\|p_n(t) - \mathrm p_n\|$ is bounded and non-increasing along trajectories. Since the leader positions are also fixed, it follows that $\|p_n(t)-\mathrm p_j\|$ is also bounded along trajectories for each $j=1,\ldots,n-1$. 
    
    We now continue the derivation to show exponential stability.  First, recall that for $x,y \in \mathbb{S}^{\mathrm d-1}$, $x^TP_yx = y^TP_xy$.  We use this below continuing from \eqref{eqn:vdot_pt1}.
   \begin{align}\label{eqn:vdot_pt2}
   \dot V(p_n(t))&= -\sum_{nj\in\mathcal{E}} \mathbf{d}_{nj} \mathbf{g}_{nj}^T P_{g_{nj}(t)}\mathbf{g}_{nj}\nonumber \\
   &\hspace{-1cm}=-\sum_{nj\in\mathcal{E}} \mathbf{d}_{nj} {g}_{nj}^T(t) P_{\mathbf{g}_{nj}}{g}_{nj}(t)\nonumber\\
   &\hspace{-1cm}=-\sum_{nj\in\mathcal{E}} \frac{\mathbf{d}_{nj}}{d_{nj}^2(t)} (\mathrm p_j-p_n(t))^T P_{\mathbf{g}_{nj}}(\mathrm p_j-p_n(t)).
   \end{align}
   Now note that $(\mathrm p_j-p_n(t))=(\mathrm p_j-\mathrm p_n+\mathrm p_n-p_n(t))$ and $P_{\mathbf{g}_{nj}}(\mathrm p_j-\mathrm p_n) = 0$.
From earlier we have that both $\mathbf{d}_{nj}$ and $d_{nj}(t)$ are bounded, and therefore 
$$\frac{\mathbf{d}_{nj}}{d_{nj}^2(t)}\geq \min_j \left(\frac{\mathbf{d}_{nj}}{\sup_td_{nj}^2(t)} \right)= \gamma, \, j=1,\ldots,n-1.$$

Continuing from \eqref{eqn:vdot_pt2} and using the above observations, we have
\begin{align}\label{eq:vdot_pt3}
\dot V(p_n(t))&= -\sum_{nj\in\mathcal{E}} \frac{\mathbf{d}_{nj}}{d_{nj}^2(t)} (\mathrm p_j-p_n(t))^T P_{\mathbf{g}_{nj}}(\mathrm p_j-p_n(t))\nonumber \\ 
&\hspace{-1cm}\leq - (p_n(t)-\mathrm p_n)^T \left(\gamma\sum_{nj\in\mathcal{E}} P_{\mathbf{g}_{nj}}\right)(p_n(t)-\mathrm p_n)
\end{align}
Let $M = \gamma \sum_{nj\in\mathcal{E}}P_{\mathbf{g}_{nj}}$.  By Assumption \ref{asp:1dof_fc}, the target bearings $\mathbf{g}_{ni}$ and $\mathbf{g}_{nj}$ are not parallel for any $i,j\in \{1,\ldots,n-1\}$, and therefore $M \succ 0$ (see Lemma \ref{lem.proj}). We finally conclude that $$\dot V(p_n(t)) \leq -  \lambda_{\min}(M) V(p_n(t)),$$ 
where $\lambda_{\min}(M)$ denotes the smallest eigenvalue of $M$. Thus, $\mathrm p_n$ is a globally exponentially stable equilibrium.
\end{proof}
\endgroup

For the 1-to-many system with at least 2 leaders, the follower $p_n(t)$  converges to the target position specified in \eqref{1tomany_eq} exponentially fast. \textcolor{black}{The following corollary shows that the convergence rate strictly increases as the number of leaders increases.
\begin{cor}\label{cor_incleader}
    Assume the same conditions as in Theorem \ref{thm:1-to-many}.  The rate of convergence of \eqref{sys:1DOF} strictly increases with the number of leaders.
\end{cor}
\begin{proof}
    Let $M= \gamma
    \sum_{nj\in\mathcal{E}}P_{\mathbf{g}_{nj}}$ (as in the proof of Theorem \ref{thm:1-to-many}) and assume a new leader is added, say with index $\ell$.  Then $M$ is updated to $M'(t) = M + \frac{\mathbf{d}_{n\ell}}{d_{n\ell}^2(t)}P_{\mathbf{g}_{n\ell}}$. The matrix \(  \frac{\mathbf{d}_{n\ell}}{d_{n\ell}^2(t)}P_{\mathbf{g}_{n\ell}} \) is symmetric and positive semi-definite, and therefore $\min {\lambda}(M'(t)) \geq \min{\lambda}(M)$. Let \( v \) be a unit eigenvector associated with the smallest eigenvalue \( \min{\lambda}(M) \). Then
$$v^T M'(t) v = v^T M v + v^T \left( \frac{\mathbf{d}_{n\ell}}{d_{n\ell}^2(t)}P_{\mathbf{g}_{nj}} \right) v.$$
Since $v$  is not parallel to $\mathbf{g}_{n\ell}$ by Assumption~\ref{asp:1dof_fc}, we have
$$v^T\left( \frac{\mathbf{d}_{n\ell}}{d_{n\ell}^2(t)} P_{\mathbf{g}_{n\ell}}\right) v =  \frac{\mathbf{d}_{n\ell}}{d_{n\ell}^2(t)}\left(\|v\|^2 - (v^T \mathbf{g}_{n\ell})^2\right) > 0,$$
implying $v^T M'(t) v > v^T M v = \min{\lambda}(M)$.  It follows that 
$\min{\lambda}(M'(t)) > \min{\lambda}(M)$ which holds for all $t$.
\end{proof}
}

We conclude this section by examining a special configuration of the 1-to-many system that admits only an unstable equilibrium.  We consider initial conditions for the leaders that satisfy $p_i(0) = \bar{\mathrm p}_i, \,i=1,\ldots, n-1$ for a $\bar{\mathrm p} \in F_B^{-1}(-\bm{\mathrm g})$ \textcolor{black}{(i.e., the leaders are not in a configuration corresponding to the desired bearings $\bm{\mathrm g}$)}. In this direction, we first introduce the notion of symmetric configurations. 
%


\begin{definition}\label{def.symmetric}
    Two configurations $p=[p_1^T\  \cdots\  p_n^T]^T$ and $q=[q_1^T\  \cdots\  q_n^T]^T$ are \emph{ symmetric with respect to $c$} if
    $$p_i-c=c-q_i,\; i=1,\ldots,n-1. $$
\end{definition}

\begin{proposition}\label{lem:unstable_1DOF_IC}
    For the target formation $(\mathcal{G},\bm {\mathrm{g}})$, consider two configurations $\mathrm p,\  \bar {\mathrm p} \in \mathbb{R}^{|\mathcal{V}|\mathrm {d}}$. If $\mathrm p$ and $\bar {\mathrm p}$ are symmetric with respect to some point $c \in \mathbb{R}^{\mathrm d}$, and $\mathrm p \in F_B^{-1}(\bm{\mathrm{g}})$, then $\bar{\mathrm p} \in F_B^{-1}(-\bm{\mathrm{g}})$.
\end{proposition}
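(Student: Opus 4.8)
The plan is to show that the central inversion through $c$ reverses every bearing vector, so that applying $F_B$ to the reflected configuration negates the entire target bearing. First I would rewrite the symmetry hypothesis from Definition~\ref{def.symmetric} in the more convenient form $\bar{\mathrm p}_i = 2c - \mathrm p_i$, which exhibits $\bar{\mathrm p}$ as the point reflection of $\mathrm p$ about $c$. For the bearings of the edges $ni \in \mathcal E$ to flip, this relation must be invoked at both endpoints of each edge, i.e. at the follower $n$ as well as at the leaders; I would state this explicitly, since in the $1$-to-many framework every edge is incident to node $n$.

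The central step is a direct computation of the edge displacements. For an arbitrary edge $k = ni \in \mathcal E$, I would compute
\begin{align*}
\bar{\mathrm p}_i - \bar{\mathrm p}_n = (2c - \mathrm p_i) - (2c - \mathrm p_n) = -(\mathrm p_i - \mathrm p_n),
\end{align*}
so the two displacement vectors are antiparallel with equal norm. Because reflection is an isometry and $\bm{\mathrm g}$ is realizable, the distances $\|\mathrm p_i - \mathrm p_n\| = \|\bar{\mathrm p}_i - \bar{\mathrm p}_n\|$ are strictly positive, so every bearing is well defined and the denominators never vanish. Consequently the bearing of edge $ni$ in the reflected configuration satisfies
\begin{align*}
\frac{\bar{\mathrm p}_i - \bar{\mathrm p}_n}{\|\bar{\mathrm p}_i - \bar{\mathrm p}_n\|} = \frac{-(\mathrm p_i - \mathrm p_n)}{\|\mathrm p_i - \mathrm p_n\|} = -\,\bm{\mathrm g}_{ni},
\end{align*}
where I have used the hypothesis $F_B(\mathrm p) = \bm{\mathrm g}$.

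Stacking this identity over all edges $k = 1, \ldots, |\mathcal E|$ yields $F_B(\bar{\mathrm p}) = -F_B(\mathrm p) = -\bm{\mathrm g}$, which is exactly the claim $\bar{\mathrm p} \in F_B^{-1}(-\bm{\mathrm g})$. I do not anticipate any genuine obstacle here: once the symmetry is written as $\bar{\mathrm p}_i = 2c - \mathrm p_i$, the result reduces to a one-line sign computation, since a point reflection reverses every displacement vector while preserving its length. The only points requiring care are the bookkeeping noted above---that the reflection relation is applied at \emph{both} endpoints of each edge, including the follower---and the well-posedness of the bearings, after which the conclusion is immediate.
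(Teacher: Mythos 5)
Your proposal is correct and follows essentially the same route as the paper: rewrite the symmetry as $\bar{\mathrm p}_i = 2c - \mathrm p_i$, apply it at both endpoints of each edge (including the follower $n$), and observe that the displacement $\bar{\mathrm p}_i - \bar{\mathrm p}_n = -(\mathrm p_i - \mathrm p_n)$ negates every bearing while preserving its norm. Your explicit remark that the reflection relation must also be invoked at node $n$ is a worthwhile clarification, since Definition~\ref{def.symmetric} as written only states the relation for $i=1,\ldots,n-1$.
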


\begin{proof}
From Definition \ref{def.symmetric}, it follows that
$$\mathrm p_i - c = c- \bar{\mathrm p}_i,$$
or equivalently that $\bar{\mathrm p}_i = 2c-\mathrm p_i$.  Since $\mathrm p \in F_B^{-1}(\bm{\mathrm{g}})$, we also have that
\begin{equation*}
    \frac{\mathrm p_i-\mathrm p_n}{\|\mathrm p_i-\mathrm p_n\|}=\bm{\mathrm g}_{ni}, \, i=1,\ldots,n-1.
\end{equation*}
Consider now the point $\bar {\mathrm p}_n$.  Since $\mathrm p$ and $\bar {\mathrm p}$ are symmetric with respect to $c$, it follows that $\bar{\mathrm p}_n = 2c-\mathrm p_n$.  We then have
\begin{align}\label{eqn:sym_IC}
    \frac{\bar{\mathrm p}_i- \bar{\mathrm p}_n}{\|\bar{\mathrm p}_i- \bar{\mathrm p}_n\|} &=\frac{2c-\mathrm p_i - (2c-\mathrm p_n)}{\|2c-\mathrm p_i - (2c-\mathrm p_n) \|}\nonumber\\
    &= -\frac{\mathrm p_i - \mathrm p_n}{\|\mathrm p_i - \mathrm p_n \|} = - \bm{\mathrm{g}_{ni}}.
\end{align}
    
%
Since equation \eqref{eqn:sym_IC} holds for every leader $i=1,\ldots, n-1$, it follows that $\bar{\mathrm p}\in F_B^{-1}(-\bm{\mathrm g})$
\end{proof}

\color{black}

\begin{Lemma}\label{lem:unstable_1dof}
    Let Assumption \ref{asp:1dof_fc} hold for the target bearing formation $(\mathcal G,\bm{\mathrm g})$ and assume that $p_i(0)=\bar{\mathrm p}_i$ for a configuration $\bar{\mathrm p} \in F_B^{-1}(-\bm{\mathrm g})$. Then 
    \begin{align*}
    \mathrm{p}_n=\left(\sum_{nj\in\mathcal{E}} P_{\bm{\mathrm g}_{nj}}\right)^{-1}\left(\sum_{nj\in\mathcal{E}}  P_{\bm{\mathrm g}_{nj}}\bar{\mathrm p}_j\right)
    \end{align*}
    is the only equilibrium of \eqref{sys:1DOF} and it is unstable.  
\end{Lemma}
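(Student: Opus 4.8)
The plan is to exploit the identity $P_{x}=P_{-x}$, which holds because the projection in \eqref{projmat} depends only on the line spanned by its argument. Consequently the vector field driving \eqref{sys:1DOF} in this setting, $-\sum_{nj\in\mathcal E}P_{g_{nj}}\bm{\mathrm g}_{nj}$, is the \emph{exact negative} of the vector field of the \emph{consistent} 1-to-many system obtained by keeping the same fixed leaders $\bar{\mathrm p}_j$ but replacing the target by $-\bm{\mathrm g}$, since $-\sum_{nj}P_{g_{nj}}(-\bm{\mathrm g}_{nj})=\sum_{nj}P_{g_{nj}}\bm{\mathrm g}_{nj}$. Call this auxiliary system (B). First I would verify that the claimed equilibrium coincides with $\bar{\mathrm p}_n$: because $\bar{\mathrm p}\in F_B^{-1}(-\bm{\mathrm g})$ gives $P_{\bm{\mathrm g}_{nj}}(\bar{\mathrm p}_n-\bar{\mathrm p}_j)=P_{-\bm{\mathrm g}_{nj}}(\bar{\mathrm p}_n-\bar{\mathrm p}_j)=0$, summing and inverting $\sum_{nj}P_{\bm{\mathrm g}_{nj}}$ (invertible by Lemma \ref{lem.proj} under Assumption \ref{asp:1dof_fc}) reproduces exactly the formula for $\mathrm p_n$ and shows $\mathrm p_n=\bar{\mathrm p}_n$; at this point the measured bearings equal $-\bm{\mathrm g}_{nj}$, so $\dot p_n=-\sum P_{-\bm{\mathrm g}_{nj}}\bm{\mathrm g}_{nj}=0$ and $\mathrm p_n$ is indeed an equilibrium.

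For uniqueness, I note that systems (A) and (B) impose the identical equilibrium condition $\sum_{nj}P_{g_{nj}}\bm{\mathrm g}_{nj}=0$, hence share the same equilibrium set. System (B) is a bona fide instance of the 1-to-many problem: its leaders satisfy $\bar{\mathrm p}\in F_B^{-1}(-\bm{\mathrm g})$, and Assumption \ref{asp:1dof_fc} transfers to the target $-\bm{\mathrm g}$ because the non-collinearity requirement only asks that the target bearings be pairwise non-parallel, a property invariant under negation (and preserved by the reflection of Proposition \ref{lem:unstable_1DOF_IC}). Therefore Lemmas \ref{lem:1dof_realizable} and \ref{lem:31} and Proposition \ref{lem:tp_1dof} apply verbatim to (B), giving it a unique equilibrium, which is thus also the unique equilibrium of (A).

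The instability is then the time-reversed counterpart of the stability proof of Theorem \ref{thm:1-to-many}. Reusing $V(p_n)=\tfrac12\|p_n-\mathrm p_n\|^2$ and writing $\bm{\mathrm g}_{nj}=\bar{\mathrm d}_{nj}^{-1}(\mathrm p_n-\bar{\mathrm p}_j)$ with $\bar{\mathrm d}_{nj}=\|\bar{\mathrm p}_j-\bar{\mathrm p}_n\|$ (valid since $\mathrm p_n=\bar{\mathrm p}_n$ and $\bar{\mathrm p}\in F_B^{-1}(-\bm{\mathrm g})$), I would substitute into $\dot V=-(p_n-\mathrm p_n)^T\sum_{nj}P_{g_{nj}(t)}\bm{\mathrm g}_{nj}$ and use $P_{g_{nj}(t)}(p_n-\bar{\mathrm p}_j)=0$ to collapse the sum, obtaining
\begin{equation*}
\dot V(p_n(t)) = (p_n(t)-\mathrm p_n)^T\Bigl(\sum_{nj\in\mathcal E}\tfrac{1}{\bar{\mathrm d}_{nj}}P_{g_{nj}(t)}\Bigr)(p_n(t)-\mathrm p_n).
\end{equation*}
This is exactly the sign flip of \eqref{eqn:vdot}: the bracketed matrix $\tilde M(t)$ is positive semidefinite, and by the same two-case analysis as in Theorem \ref{thm:1-to-many} (invoking Lemma \ref{lem.proj}) it is positive definite whenever the follower is not collinear with the leaders. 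Since at the equilibrium the bearings are $-\bm{\mathrm g}_{nj}$, pairwise non-parallel by Assumption \ref{asp:1dof_fc}, continuity yields a neighborhood of $\mathrm p_n$ on which $\tilde M(t)\succeq\bar\lambda I\succ0$, so $\dot V\ge \bar\lambda V$. Hence any trajectory started arbitrarily close to but not equal to $\mathrm p_n$ obeys $V(t)\ge V(0)e^{\bar\lambda t}$ and must leave the neighborhood, which is precisely Lyapunov instability.

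The main obstacle I anticipate is not the algebra but making the positivity-on-a-neighborhood argument watertight: one must ensure the degenerate collinear directions where $\tilde M(t)$ loses rank do not align with the escape direction $p_n-\mathrm p_n$ (exactly as dispatched in case $i$) of Theorem \ref{thm:1-to-many}), and that the uniform lower bound $\bar\lambda>0$ is legitimate given that $g_{nj}(t)$, and hence $\tilde M(t)$, vary along the trajectory. A clean alternative that sidesteps this bookkeeping is to invoke the time-reversal/converse-Lyapunov viewpoint directly: since (B) is globally exponentially stable at $\mathrm p_n$ by Theorem \ref{thm:1-to-many}, its time reversal (A) has $\mathrm p_n$ as an unstable equilibrium.
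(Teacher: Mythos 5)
Your proposal is correct and follows essentially the same route as the paper: both exploit the linearity of the control in the target bearing (equivalently $P_x=P_{-x}$) to identify the system as the exact negative of the consistent 1-to-many system with target $-\bm{\mathrm g}$, reuse Proposition \ref{lem:tp_1dof} and the uniqueness lemmas for the equilibrium, and flip the sign of the Lyapunov derivative from Theorem \ref{thm:1-to-many} to get $\dot V>0$. The only cosmetic difference is that the paper concludes instability by citing Chetaev's theorem, whereas you give the equivalent direct escape argument $\dot V\ge \bar\lambda V$ (and the time-reversal observation), both of which are valid.
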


\begin{proof}
    First, recall that the control \textcolor{black}{terms $\kappa_{nj}(g_{nj},\bm{\mathrm g}_{nj})$ are} linear in the target bearing $\bm{\mathrm g}$.  Therefore, it follows that \textcolor{black}{$\kappa_{nj}(g_{nj},\bm{\mathrm{g}}_{nj})=-\kappa_{nj}(g_{nj},-\bm{\mathrm{g}}_{nj})$}, and the BOFC for the bearing formation $(\mathcal G, \bm{\mathrm{g}})$ has the same equilibrium point as the bearing formation $(\mathcal G, -\bm{\mathrm{g}})$. Furthermore, the dynamics of the follower satisfies
    \textcolor{black}{\begin{align*}\dot p_n(t) &=\sum_{nj\in \mathcal E}\kappa_{nj}(g_{nj},\bm{\mathrm{g}}_{nj}) \\&=\sum_{nj\in \mathcal E} -\kappa_{nj}(g_{nj},-\bm{\mathrm{g}}_{nj}) = \sum_{nj\in \mathcal E}P_{\mathrm g_{nj}(t)}\bm{\mathrm g}_{nj}.\end{align*}}
    The equilibrium condition can now be verified using the same arguments as in  Proposition \ref{lem:tp_1dof}.  Using the same Lyapunov function construction as in Theorem \ref{thm:1-to-many} it is straightforward to verify that $\dot V(p_n(t))> 0$. Applying Chetaev instability theorem we can conclude this equilbirum is unstable.

\end{proof}

A key point of this result shows that symmetric configurations for the leader initial conditions correspond to either  stable or unstable trajectories. This result may not be suprising as we are choosing initial conditions for the leaders that do not correspond to desired bearing measurements.  Nevertheless, this characterization will be important to establish equilibria and stability conditions for more general LFF graphs in the sequel.

\subsection{BOFC with Ordered LFF Graphs}
\label{c4}

In the previous section, we outlined an analysis approach for the one-to-many BOFC.  This turns out to be a central idea when trying to generalize the LFF structures.  In particular, we aim next to relax the assumption in Definition \ref{def:HCLFF} that each follower agent must have exactly 2 outgoing edges. We call such graphs \emph{ordered LFF graphs}. An example of an ordered LFF graph is showed in Figure \ref{fig.odLFF}, with the leader and first-follower as nodes $1,2$, respectively. Note that all edges are forward edges but it is possible for some nodes to have more than two outgoing edges, \textcolor{black}{which is the main difference from the LFF graph.}





\begin{definition}
    A directed graph  is an \emph{ordered LFF graph} (OLFF) if
    \begin{itemize}
        \item[$i$)] there is a vertex with no outgoing edges, denoted as the \emph{leader}, assigned the label $v_1$; 
    \item[$ii$)] there is a vertex with only one outgoing edge pointing to the leader, denoted as the \emph{first follower} assigned the label $v_2$;
        \item[$iii$)] every vertex $v_i$ other than the leader and the first follower has at least two outgoing edges, pointing to vertices with indices smaller that $i$.
    \end{itemize}
\label{prop:1}
\end{definition}

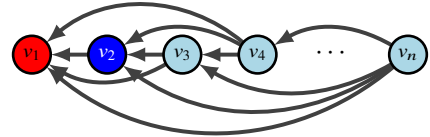
\begin{figure}[h]
        \centering
        \begin{tikzpicture}
        \Vertex[size=.5,label=$v_1$,color=red,x=0,y=0]{v1}
        \Vertex[size=.5,label=$\color{white}{v_2}$,color=blue,x=1,y=0]{v2}
        \Vertex[size=.5,label=$v_3$,x=2,y=0]{v3}
        \Vertex[size=.5,label=$v_4$,x=3,y=0]{v4}
        \Vertex[size=.5,label=$v_{n}$,x=5,y=0]{vi}
        \node at (4,0) {$\cdots$};
        \Edge[Direct](v2)(v1)
        \Edge[Direct,bend=30](v3)(v1)
        \Edge[Direct](v3)(v2)
        \Edge[Direct,bend=-40](v4)(v1)
        \Edge[Direct,bend=-30](v4)(v2)
        \Edge[Direct](v4)(v3)
        \Edge[Direct,bend=40](vi)(v1)
        \Edge[Direct,bend=35](vi)(v2)
        \Edge[Direct,bend=30](vi)(v3)
        \Edge[Direct,bend=-30](vi)(v4)
        \end{tikzpicture}
        \caption{An example of an ordered LFF graph.}\label{fig.odLFF}
    \end{figure}

\textcolor{black}{Returning to the example in Figure \ref{fig:ex1_tf_2}, we see that this graph is OLFF. }When considering the BOFC for directed graphs \eqref{eqn:law_di} over ordered LFF graphs, we reveal that it is still possible to express the dynamics in a cascade form.  Indeed, for OLFF graphs, the dynamics for each agent can be expressed as
\begin{align}\label{sys:p1}
   \begin{cases}
       \dot p_1(t) &= 0 \\
\dot p_2(t) &= \textcolor{black}{-P_{g_{21}(t)}\bm{\mathrm{g}_{21}}} \\
\dot p_i(t) &= -\sum_{ij\in \mathcal E}\kappa_{ij}(g_{ij}(t),\bm{\mathrm g}_{ij}), \, i=3,\ldots,n
\end{cases},
\end{align}
where we have that agent $i$ only requires information from agents with indices $j < i$.

Similar to Assumption \ref{asp:1dof_fc}, an assumption on the target bearing formation is required for  Problem (\ref{prob.formationcontrol}).

\begin{asp}
    The target bearing formation $(\mathcal{G},\bm{\mathrm{g}})$ is realizable. Furthermore, there exists a configuration $\mathrm p \in F_B^{-1}(\bm{\mathrm g})$ such that $\mathrm p_i, \mathrm p_j$ and $\mathrm p_k$ are not collinear for any $v_j,v_k \in \mathcal{N}_i$.
    \label{asp:bfc_fc}
\end{asp}

\textcolor{black}{Once a target bearing formation is specified, the absolute position and scale of the formation remain to be fixed. In the ordered LFF graph structure, the absolute position of the leader $p_1(0)$ anchors the formation’s translational degree of freedom, while the initial distance $d_{21}(0)$ between the leader and the first follower fixes the formation’s scale. Together, these anchor conditions determine a unique realization of the target formation in $\mathbb{R}^{\mathrm dn}$. This relationship is formalized in the following lemma.}

\begin{Lemma}\label{lem:uni_tc}
    (Uniqueness of target configuration). 
    \textcolor{black}{Given a target bearing formation $(\mathcal{G},\bm{\mathrm g})$ with an ordered LFF graph $\mathcal G$ satisfying Assumption 2, and given the initial position of the leader agent $p_1(0)$ and the initial distance $d_{21}(0)$ between the leader and the first follower, the target configuration $\mathrm p \in \mathbb{R}^{dn}$ consistent with $(\mathcal{G},\bm{\mathrm g})$ is uniquely determined.}
    More specifically, $\mathrm p_i$ is calculated iteratively by:
    \begin{align*}
        \mathrm p_1&= p_1(0)\\
        \mathrm p_2&= p_1(0) - d_{21}(0) \bm{\mathrm g}_{21}\\
        \mathrm p_i&= \left(\sum_{ij\in\mathcal{E}} P_{\bm{\mathrm g}_{ij}}\right)^{-1}\left(\sum_{ij\in\mathcal{E}}  P_{\bm{\mathrm g}_{ij}}\mathrm p_j\right),\, i=3,\ldots,n.
    \end{align*}
\end{Lemma}

The proof of the lemma is straight forward, which can be derived from \cite[Lemma 1]{trinh2018bearing} and {P}roposition \ref{lem:tp_1dof}.


\begin{theorem}\label{the:pp1}
Let Assumption \ref{asp:bfc_fc} hold for some target bearing configuration $(\mathcal G,\bm{\mathrm g})$. If $\mathcal G$ is an ordered LFF graph, then the bearing-only formation control \eqref{eqn:law_di} satisfies $$\lim_{t\to \infty} p(t)=\mathrm p \in F_{B}^{-1}(\bm{\mathrm g})$$ 
\textcolor{black}{for almost all initial conditions, and the convergence rate is exponential.}
\end{theorem}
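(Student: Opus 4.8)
The plan is to exploit the lower-triangular cascade structure of \eqref{sys:p1}, in which agent $i$'s dynamics depends only on agents $j<i$, and to prove by induction on the vertex index that $p_i(t)\to\mathrm p_i$ exponentially, where $\mathrm p_i$ is the unique target position furnished by Lemma \ref{lem:uni_tc}. The base cases are immediate: the leader is stationary, so $p_1(t)\equiv\mathrm p_1=p_1(0)$; and by the analysis of \cite{trinh2018bearing} the first follower converges exponentially to $\mathrm p_2=p_1(0)-d_{21}(0)\bm{\mathrm g}_{21}$ from every initial condition except the antipodal alignment $g_{21}(0)=-\bm{\mathrm g}_{21}$. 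This antipodal start lies on a measure-zero set and, as I argue below, is the sole source of the ``almost all'' qualifier.

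For the inductive step, assume agents $1,\dots,i-1$ converge exponentially to $\mathrm p_1,\dots,\mathrm p_{i-1}$, and write $p_j(t)=\mathrm p_j+\epsilon_j(t)$ for each in-neighbor $j\in\mathcal N_i$, with $\|\epsilon_j(t)\|\to 0$ exponentially. Freezing the in-neighbors at $\mathrm p_j$ turns agent $i$'s dynamics into exactly the one-follower, many-leader system \eqref{sys:1DOF} for the target bearing $\bm{\mathrm g}$; Assumption \ref{asp:bfc_fc} guarantees the non-collinearity condition (Assumption \ref{asp:1dof_fc}) for this sub-problem, and since every follower $i\ge 3$ has at least two in-neighbors, Theorem \ref{thm:1-to-many} yields a globally exponentially stable equilibrium at $\mathrm p_i$. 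The true dynamics of agent $i$ is thus a cascade in which the exponentially decaying signal $(\epsilon_j(t))_{j\in\mathcal N_i}$ perturbs this globally exponentially stable system.

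I would close the induction with a standard cascade argument. Reusing the Lyapunov function $V_i=\tfrac12\|p_i-\mathrm p_i\|^2$ from the proof of Theorem \ref{thm:1-to-many} and retaining the cross terms generated by the in-neighbor motion, the estimate \eqref{eqn:vdot} becomes $\dot V_i\le -\bar\lambda V_i + c\,\|\epsilon(t)\|\sqrt{V_i}$, where $\bar\lambda>0$ is a uniform lower bound on $\lambda_{\min}(M(t))$ and the perturbation enters through the bearings $g_{ij}(t)$ and distances $d_{ij}(t)$. Since the control \eqref{eqn:law_di} is uniformly bounded, $p_i$ cannot escape in finite time, and a comparison-lemma argument then shows that the exponentially decaying forcing $\|\epsilon(t)\|$ drives $\sqrt{V_i(t)}\to 0$ exponentially. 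Applying this from $i=2$ up to $i=n$ gives exponential convergence of the entire configuration to $\mathrm p\in F_B^{-1}(\bm{\mathrm g})$.

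The main obstacle is making the cascade step uniform over the transient. Theorem \ref{thm:1-to-many} presumes the leaders sit exactly at their targets, whereas here they only converge, and while $p_j(t)$ is far from $\mathrm p_j$ the bearings $g_{ij}(t)$ may momentarily become collinear, so that the positive definiteness of $M(t)$ in \eqref{eqn:vdot}, and hence the bound $\bar\lambda>0$, can fail. I would handle this by showing that such degenerate configurations are non-persistent: after a finite time the in-neighbors enter a neighborhood of $\mathrm p_j$ on which Assumption \ref{asp:bfc_fc} forces uniform non-collinearity and a uniform $\bar\lambda>0$, while over the preceding finite interval the boundedness of \eqref{eqn:law_di} keeps $p_i$ in a compact set, so the comparison estimate applies from that time onward. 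The remaining check is that no other exceptional initial conditions arise: once the in-neighbors of a later follower have converged to their correct positions, the global exponential stability of Theorem \ref{thm:1-to-many} guarantees convergence of that follower from every initial condition, so the only non-convergent trajectories are those for which the first follower starts on the measure-zero stable manifold of its antipodal (reflected) equilibrium described by Lemma \ref{lem:unstable_1dof}, which completes the ``almost all initial conditions'' claim.
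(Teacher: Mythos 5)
Your proposal follows essentially the same route as the paper: exploit the cascade structure of \eqref{sys:p1}, treat each follower as a one-to-many subsystem whose in-neighbors sit (asymptotically) at their target positions, and invoke Proposition \ref{lem:tp_1dof}, Theorem \ref{thm:1-to-many}, and Lemma \ref{lem:unstable_1dof} to identify the stable target equilibrium and the unstable reflected one, concluding almost-global exponential convergence by induction up the vertex ordering. The only substantive difference is that you make the cascade interconnection explicit via an ISS/comparison-lemma estimate and flag the transient loss of positive definiteness of $M(t)$ while the in-neighbors are still converging, whereas the paper simply appeals to a stability theorem for cascade systems; your version is, if anything, the more careful of the two on that step.
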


\begin{proof}
Given the cascade structure shown in \eqref{sys:p1}, we can analyze the equilibria and stability of each agent successively.  Denote by $\bf{\mathrm p}$ the equilibria of \eqref{sys:p1}. Following \cite{trinh2018bearing}, it is straightforward to verify that ${\mathrm p}_1=p_1(0)$ and that there are two possible equilibrium configurations for the first follower, which are denoted as ${{\mathrm p}_{2a}}={{\mathrm p}}_1 - d_{21}(0){\bm{\mathrm g}}_{21}$ and ${{\mathrm p}_{2b}}={{\mathrm p}}_1 + d_{21}(0){\bm{\mathrm g}}_{21}$. \textcolor{black}{From Lemma \ref{lem:uni_tc}, it follows that $\mathrm p_{2a}$ is exactly the position of the unique target configuration $\mathrm p$. In addition, the equilibria $\mathrm p_{2b}$ is symmetric to $ \mathrm p_{2a}$ with respect to the point $\mathrm p_1$.}

The dynamics of agent 3 must follow
$$\dot p_3=u_3(p_1,p_2,p_3)=\sum_{j=1}^2\kappa_{3j}(g_{3j},\mathbf g_{3j}).$$
 We may consider the dynamics of agent 3 as a 1-to-many system where the leader agents have positions $(\mathrm p_1,\mathrm p_{2a})$ or $(\mathrm p_1,\mathrm p_{2b})$. We note that $(\mathrm p_1,\mathrm p_{2a})$ satisfies Assumption \ref{asp:bfc_fc}, while $(\mathrm p_1,\mathrm p_{2b})$ does not since it symmetric with respect to $\mathrm p_1$. 
Using Proposition \ref{lem:tp_1dof} and Lemma \ref{lem:unstable_1dof}, we conclude that 
$$\mathrm p_{3} = (P_{\bm{\mathrm{g}}_{31}}+P_{\bm{\mathrm{g}}_{32}})^{-1}(P_{\bm{\mathrm{g}}_{31}}{\mathrm p}_1+P_{\bm{\mathrm{g}}_{32}}{\mathrm p}_{2a})$$
corresponds to the stable equilibrium position of agent 3. Lemma \ref{lem:unstable_1dof} is used to show that the equilibrium point $(P_{\bm{\mathrm{g}}_{31}}+P_{\bm{\mathrm{g}}_{32}})^{-1}(P_{\bm{\mathrm{g}}_{31}}{\mathrm p}_1+P_{\bm{\mathrm{g}}_{32}}{\mathrm p}_{2b})$ is unstable.

For the other agents, all of them have at least two forward edges and the analysis is similar to that for agent 3.  Thus, we proceed by induction.
Consider an agent  $i \in \{3,\ldots,n\}$. Its equilibrium analysis can be separated into studying the systems
\begin{align*}
     \dot p_{ia} = u_i(p_1={\mathrm p}_1,p_2={\mathrm p}_{2a},\cdots,p_{i-1}=\mathrm p_{(i-1)a},p_{ia})\\
   \dot p_{ib} = u_i(p_1={\mathrm p}_1,p_2={\mathrm p}_{2b},\cdots,p_{i-1}=\mathrm p_{(i-1)b},p_{ib})
\end{align*}

The system ($i$a) is the 1-to-many system with leaders' initial condition satisfying Assumption \ref{asp:bfc_fc}.
The equilibria is therefore 
$${\mathrm p}_{ia}=\left(\sum_{j\in\mathcal{N}_i} P_{\bm{\mathrm g}_{ij}}\right)^{-1}\left(\sum_{j\in\mathcal{N}_i} P_{\bm{\mathrm g}_{ij}}\mathrm p_{ja}\right).$$

The system ($i$b) is the 1-to-many system with leaders' initial conditions that are symmetric with respect to $\mathrm p_1$, and therefore Assumption \ref{asp:bfc_fc} does not hold. 
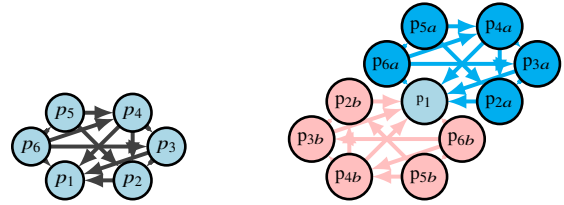
\begin{figure}[h]
    \centering
    \begin{subfigure}[t]{.3\linewidth}
    \centering
    \begin{tikzpicture}[scale=0.45]
    \Vertex[size=.5,label=$p_1$,x=0,y=0]{v1}
    \Vertex[size=.5,label=$p_2$,x=2,y=0]{v2}
    \Vertex[size=.5,label=$p_3$,x=3,y=1]{v3}
    \Vertex[size=.5,label=$p_4$,x=2,y=2]{v4}
    \Vertex[size=.5,label=$p_5$,x=0,y=2]{v5}
    \Vertex[size=.5,label=$p_6$,x=-1,y=1]{v6}
    \Edge[Direct](v2)(v1)
    \Edge[Direct](v3)(v1)
    \Edge[Direct](v3)(v2)
    \Edge[Direct](v4)(v1)
    \Edge[Direct](v4)(v2)
    \Edge[Direct](v4)(v3)
    \Edge[Direct](v5)(v2)
    \Edge[Direct](v5)(v4)
    \Edge[Direct](v6)(v1)
    \Edge[Direct](v6)(v3)
    \Edge[Direct](v6)(v4)
    \Edge[Direct](v6)(v5)
    \end{tikzpicture}
    \caption{The target formation.}
    \label{fig:tg_eq_type}
    \end{subfigure}
    \begin{subfigure}[t]{.65\linewidth}
        \centering
    \begin{tikzpicture}[scale=1]
    \Vertex[size=.6,label=\tiny{${\mathrm p}_1$},x=0,y=0]{v1}
    \Vertex[size=.6,label=${\mathrm p}_{2a}$,color=cyan,x=1,y=0]{v2a}
    \Vertex[size=.6,label=${\mathrm p}_{3a}$,color=cyan,x=1.5,y=0.5]{v3a}
    \Vertex[size=.6,label=$\mathrm p_{4a}$,color=cyan,x=1,y=1]{v4a}
    \Vertex[size=.6,label=$\mathrm p_{5a}$,color=cyan,x=0,y=1]{v5a}
    \Vertex[size=.6,label=$\mathrm p_{6a}$,color=cyan,x=-0.5,y=0.5]{v6a}
    \Vertex[size=.6,label=${\mathrm p}_{2b}$,color=pink,x=-1,y=0]{v2b}
    \Vertex[size=.6,label=${\mathrm p}_{3b}$,color=pink,x=-1.5,y=-0.5]{v3b}
    \Vertex[size=.6,label=$\mathrm p_{4b}$,color=pink,x=-1,y=-1]{v4b}
    \Vertex[size=.6,label=$\mathrm p_{5b}$,color=pink,x=0,y=-1]{v5b}
    \Vertex[size=.6,label=$\mathrm p_{6b}$,color=pink,x=0.5,y=-0.5]{v6b}
    \Edge[Direct,color=cyan](v2a)(v1)
    \Edge[Direct,color=cyan](v3a)(v1)
    \Edge[Direct,color=cyan](v3a)(v2a)
    \Edge[Direct,color=cyan](v4a)(v1)
    \Edge[Direct,color=cyan](v4a)(v2a)
    \Edge[Direct,color=cyan](v4a)(v3a)
    \Edge[Direct,color=cyan](v5a)(v2a)
    \Edge[Direct,color=cyan](v5a)(v4a)
    \Edge[Direct,color=cyan](v6a)(v1)
    \Edge[Direct,color=cyan](v6a)(v3a)
    \Edge[Direct,color=cyan](v6a)(v4a)
    \Edge[Direct,color=cyan](v6a)(v5a)
    \Edge[Direct,color=pink](v2b)(v1)
    \Edge[Direct,color=pink](v3b)(v1)
    \Edge[Direct,color=pink](v3b)(v2b)
    \Edge[Direct,color=pink](v4b)(v1)
    \Edge[Direct,color=pink](v4b)(v2b)
    \Edge[Direct,color=pink](v4b)(v3b)
    \Edge[Direct,color=pink](v5b)(v2b)
    \Edge[Direct,color=pink](v5b)(v4b)
    \Edge[Direct,color=pink](v6b)(v1)
    \Edge[Direct,color=pink](v6b)(v3b)
    \Edge[Direct,color=pink](v6b)(v4b)
    \Edge[Direct,color=pink](v6b)(v5b)
    \end{tikzpicture}
    \caption{The equilibrium configurations $\mathrm p_a$ and $\mathrm p_b$ determined from the target formation \ref{fig:tg_eq_type}.}
    \end{subfigure}
    \caption{Examples of two symmetric configurations with respect to the point $\mathrm p_1$.}
    \label{fig:eq_type}
\end{figure}
As illustrated in Figure \ref{fig:eq_type}, the configuration $\mathrm p_a$ satisfies all the target bearing constraints $\bm{\mathrm{g}}$. The configuration $\mathrm p_b$ satisfies the bearing constraint $-\bm{\mathrm{g}}$ is symmetric to configuration $\mathrm p_a$ with respect to $\mathrm p_1$.

The last step is to prove the stability of the equilibrium. Recall the bearing-only formation control system with the sensing graph described by ordered LFF graph stated in equation \eqref{sys:p1} is in the form of a cascade system. 
Firstly, for the subsystem
\begin{align*}
    \dot p_1=0,
\end{align*}
the equilibrium ${\mathrm p}_1=p_1(0)$ is stable. 

For the subsystem $\dot p_2 = u_2(p_1={\mathrm p}_1,p_2)$, it has been showed that ${\mathrm p}_{2a}$ is an almost GAS equilibria (\cite{trinh2018bearing}). With the stability theorem of cascade systems, we conclude that for the subsystem
\begin{align*}
    \begin{bmatrix}
        \dot p_1\\ \dot p_2
    \end{bmatrix}=
    \begin{bmatrix}
            u_1(p_1) \\ u_2 (p_1,p_2)
        \end{bmatrix},
\end{align*}
the equilibrium $({\mathrm p}_1,{\mathrm p}_{2a})$ is almost GAS.

Next, we find that ${\mathrm p}_{3a}$ is the GAS equilibrium for the 1-to-many system $\dot p_3=u_3 (p_1={\mathrm p}_1,p_2={\mathrm p}_{2a},p_3)$. 
%

The procedure can be repeated for the remaining subsystems $\dot p_i=u_i(p_1,\ldots,p_i)$ for $i\in\{4,\ldots,n\}$, concluding that almost global asymptotic stability for the BOFC system \eqref{sys:p1} with ordered LFF sensing graph. 

Finally, as a consequence of Theorem \ref{thm:1-to-many}, the convergence rate of each subsystem is exponential, with the Lyapunov exponent dependent on matrix $M(t)$ associated with that subsystem (see the proof of Theorem \ref{thm:1-to-many}).  Let $\lambda(M_i)$ denote this exponent.  Then the BOFC system \eqref{sys:p1} with ordered LFF sensing graph also converges exponentially fast with convergence rate $\min_{i} \lambda(M_i)$.

\end{proof}

An immediate corollary of above is that the convergence rate increases with the number of forward edges.
\begin{cor}
    Consider two formations $(\mathcal G_1,\bm{\mathrm g}_1)$ and $(\mathcal G_2,\bm{\mathrm g}_2)$ where $\mathcal G_1 \subset \mathcal G_2$ and $\mathcal G_1,\mathcal G_2$ are both ordered LFF graphs.  Then the convergence rate of the BOFC system \eqref{sys:p1} for the target formation $(\mathcal G_2,\bm{\mathrm g}_2)$ is faster than  that of the target formation $(\mathcal G_1,\bm{\mathrm g}_1)$.
\end{cor}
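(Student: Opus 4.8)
The plan is to reduce the corollary to a per-agent comparison of the matrices $M_i(t)$ that govern the convergence exponent of each subsystem in the cascade \eqref{sys:p1}. By Theorem \ref{the:pp1}, the overall convergence rate of the BOFC system on an ordered LFF graph is $\min_i \lambda(M_i)$, where for follower $i$ the relevant matrix is
\begin{equation*}
M_i(t)=\sum_{ij\in\mathcal{E}}\frac{P_{g_{ij}(t)}}{d_{ij}(t)},
\end{equation*}
a sum of positive semidefinite terms taken over the outgoing edges of $i$, and $\lambda(M_i)=\min_t \lambda_{\min}(M_i(t))$ as in the proof of Theorem \ref{thm:1-to-many}. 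Since both graphs share the same node set and $\mathcal{G}_1\subset\mathcal{G}_2$, I would first observe that the two target formations are consistent, i.e.\ induced by a common target configuration $\mathrm p$, so that the equilibria and the per-agent decompositions of the cascade coincide structurally and only the number of summands in each $M_i$ differs.

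Next I would carry out the comparison agent by agent. Let $\mathcal{E}_1,\mathcal{E}_2$ denote the edge sets of $\mathcal{G}_1,\mathcal{G}_2$, and for each follower $i$ let $M_i^{(1)}(t)$ and $M_i^{(2)}(t)$ be the matrices built from the outgoing edges of $i$ in $\mathcal{G}_1$ and $\mathcal{G}_2$, respectively. Because $\mathcal{G}_1\subset\mathcal{G}_2$, every outgoing edge of $i$ in $\mathcal{G}_1$ is also present in $\mathcal{G}_2$, so we can write
\begin{equation*}
M_i^{(2)}(t)=M_i^{(1)}(t)+\Delta_i(t), \qquad \Delta_i(t)=\sum_{ij\in\mathcal{E}_2\setminus\mathcal{E}_1}\frac{P_{g_{ij}(t)}}{d_{ij}(t)}\succeq 0,
\end{equation*}
since each projection matrix is positive semidefinite and each distance is positive. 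Applying Weyl's inequality \cite{Franklin1968} to the sum of a symmetric matrix and a positive semidefinite matrix gives $\lambda_{\min}(M_i^{(2)}(t))\geq \lambda_{\min}(M_i^{(1)}(t))$ at every $t$, hence $\lambda(M_i^{(2)})\geq \lambda(M_i^{(1)})$. Taking the minimum over all followers then yields $\min_i \lambda(M_i^{(2)})\geq \min_i \lambda(M_i^{(1)})$, which is the claimed improvement in convergence rate. This is the same mechanism already noted after Theorem \ref{thm:1-to-many} for increasing the number of leaders of a single follower.

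The hard part will be making the pointwise-in-time comparison fully rigorous, because the matrices $M_i(t)$ depend on the actual trajectories, and these differ between the two systems: the bearings $g_{ij}(t)$ and distances $d_{ij}(t)$ evolve differently once an edge is added, so $M_i^{(1)}(t)$ and $M_i^{(2)}(t)$ are not literally evaluated along the same path. I would address this either by interpreting $\lambda(M_i)$ as a worst-case exponent over the bounded set of admissible configurations compatible with Assumption \ref{asp:bfc_fc}, over which the additional semidefinite terms only raise the uniform lower bound on $\lambda_{\min}$, or by working near the common equilibrium $\mathrm p$, where the bearings and distances converge to their target values $\bm{\mathrm g}_{ij}$ and $\|\mathrm p_i-\mathrm p_j\|$ for both systems, so that the limiting matrices obey the same Weyl comparison. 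Either route reduces the remaining work to the monotonicity of $\lambda_{\min}$ under addition of positive semidefinite matrices, which is immediate.
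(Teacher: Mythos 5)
Your proposal is correct and takes essentially the same route as the paper, whose entire proof is the one-line remark that the corollary is ``a direct consequence of Weyl's inequality'': the mechanism you spell out---writing $M_i^{(2)}(t)=M_i^{(1)}(t)+\Delta_i(t)$ with $\Delta_i(t)\succeq 0$ and invoking monotonicity of $\lambda_{\min}$ under addition of positive semidefinite matrices, then taking the minimum over followers---is exactly what the authors intend. If anything you are more careful than the paper, since you explicitly flag (and propose remedies for) the fact that the two systems evolve along different trajectories, a subtlety the paper's proof does not acknowledge at all.
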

\begin{proof}
\textcolor{black}{The proof follows the same logic Corollary \ref{cor_incleader}.}
\end{proof}

\textcolor{black}{As a final remark, we note that the presence of a stationary leader agent in both the LFF and ordered LFF graph structures effectively fixes the translational degree of freedom in the formation. Consequently, convergence of the bearings in these frameworks also guarantees convergence of the positions $p(t)$ to a unique equilibrium configuration, determined by the initial positions of the leader and first follower. This contrasts with more general bearing-only control scenarios, where translational or scaling ambiguities may persist.}

\section{Simulation Results}\label{sec.simulation}

In this section, we present some numerical simulations to illustrate the main results of this work, in addition to an example highlighting an open problem.

\subsection{One-to-Many BOFC Example}

\begin{figure}[b]
    \centering
    \begin{subfigure}[b]{0.48\linewidth}
        \centering
        \includegraphics[width=\textwidth]{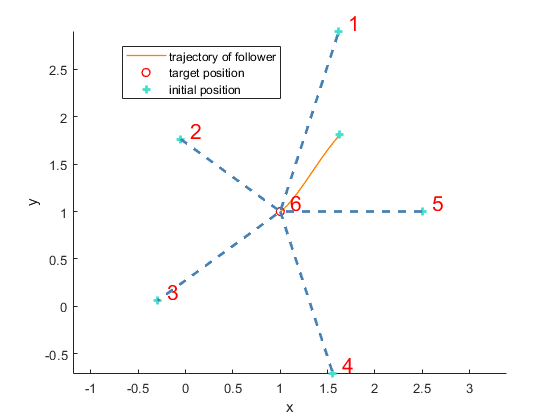}
        \caption{System trajectories.}\label{fig:sim_1m_5}
    \end{subfigure}
    \begin{subfigure}[b]{0.48\linewidth}
        \centering
        \includegraphics[width=\textwidth]{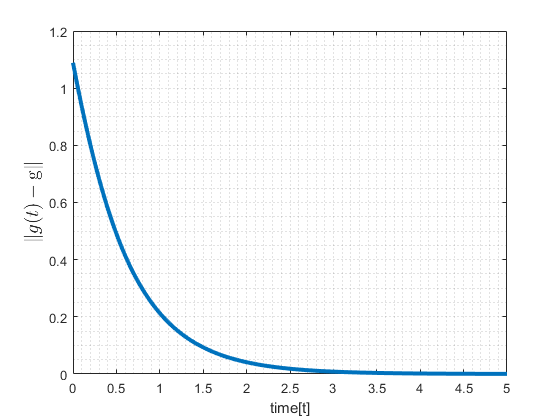}
        \caption{Bearing error along the system trajectories.}\label{fig:sim_1m_5_err}
    \end{subfigure}
    \caption{A 1-to-many system with 5 leaders.}
    \label{fig:1DOF_sim}
\end{figure}

Consider $6$ agents in the one-to-many system configuration. In this example, the desired bearings are $\bm{\mathrm{g}}_{61}=[0.309,0.951]^T$, $\bm{\mathrm{g}}_{62}=[-0.809,0.588]^T$, $\bm{\mathrm{g}}_{63}=[-0.809,-0.588]^T$, $\bm{\mathrm{g}}_{64}=[0.309,-0.951]^T$, and $\bm{\mathrm{g}}_{65}=[1,0]^T$. The leaders are placed at $\mathrm p_1=[1.618,2.902]^T$, $\mathrm p_2=[-0.051,1.764]^T$, $\mathrm p_3=[-0.294,0.060]^T$, $\mathrm p_4=[1.556,-0.712]^T$, $\mathrm p_5=[2.500,0]^T$, which can be verified to satisfy Assumption \ref{asp:1dof_fc}.
The initial position for the follower agent is chosen randomly as $p_6(0)=[1.6254,1.8106]^T$. 

According to Proposition \ref{lem:tp_1dof}, equilibrium position for the follower agent can be calculated as 
$$\mathrm p_6=\left(\sum_{j=1}^5 P_{\bm{\mathrm g}_{6j}}\right)^{-1}\left(\sum_{j=1}^5  P_{\bm{\mathrm g}_{6j}}\mathrm p_j\right)=\begin{bmatrix} 1 \\ 1 \end{bmatrix}. $$
%
Figure \ref{fig:sim_1m_5} depicts the trajectory and the final position of the follower matching the analytic result. Figure \ref{fig:sim_1m_5_err}  verifies that the bearing error converges to zero exponentially fast.

\begin{figure}[h]
    \centering
    \begin{subfigure}[b]{0.48\linewidth}
        \centering
        \includegraphics[width=\textwidth]{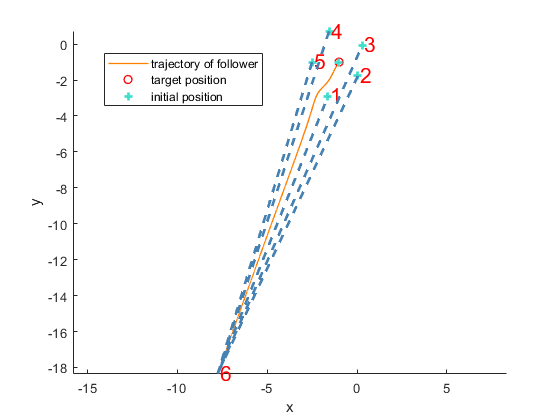}
        \caption{System trajectories.}\label{fig:sym_1m_5_tra}
    \end{subfigure}
    \begin{subfigure}[b]{0.48\linewidth}
        \centering
        \includegraphics[width=\textwidth]{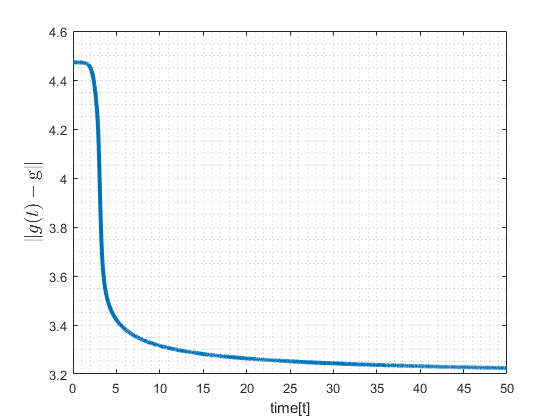}
        \caption{Bearing error along the system trajectories.}\label{fig:sym_1m_5_err}
    \end{subfigure}
    \caption{1-to-many system initialized near the unstable equilibrium.}
    \label{fig:1DOF_sym}
\end{figure}

We now consider the same setup but with initial conditions that are symmetric with respect to the origin. The leaders are placed at $\bar p= -[\mathrm p_1^T,\mathrm p_2^T,\mathrm p_3^T,\mathrm p_4^T,\mathrm p_5^T]^T$. From Lemma \ref{lem:unstable_1dof}, the system has a unique unstable equilibria at $\mathrm p_6=[1\ 1]^T$. In simulation, the initial position is taken near the unstable equilibria at $p_6(0)=[1.01\ 1.01]^T$.
As depicted in Fig.\ref{fig:1DOF_sym}, the follower diverges from the unstable equilibria, and the bearing error does not converge to zero.

\subsection{Performance Improvement of Ordered LFF Graphs}\label{subsec.convergence}

In this example, we demonstrate how ordered LFF graphs lead to faster convergence to the desired formation compared to the LFF graphs used in \cite{trinh2018bearing}.
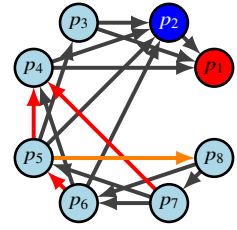
\begin{figure}
    \centering
    \begin{tikzpicture}[scale=0.6]
    \Vertex[size=.5,label=$p_1$,color=red,x=2,y=1]{v1}
    \Vertex[size=.5,label=$\color{white}{p_2}$,color=blue,x=1,y=2]{v2}
    \Vertex[size=.5,label=$p_3$,x=-1,y=2]{v3}
    \Vertex[size=.5,label=$p_4$,x=-2,y=1]{v4}
    \Vertex[size=.5,label=$p_5$,x=-2,y=-1]{v5}
    \Vertex[size=.5,label=$p_6$,x=-1,y=-2]{v6}
    \Vertex[size=.5,label=$p_7$,x=1,y=-2]{v7}
    \Vertex[size=.5,label=$p_8$,x=2,y=-1]{v8}
    \Edge[Direct](v2)(v1)
    \Edge[Direct](v3)(v1)
    \Edge[Direct](v3)(v2)
    \Edge[Direct](v4)(v1)
    \Edge[Direct](v4)(v2)
    \Edge[Direct](v5)(v2)
    \Edge[Direct](v5)(v3)
    \Edge[Direct](v6)(v2)
    \Edge[Direct](v6)(v4)
    \Edge[Direct](v7)(v5)
    \Edge[Direct](v7)(v6)
    \Edge[Direct](v8)(v6)
    \Edge[Direct](v8)(v7)
    \Edge[Direct,color=red](v5)(v4)
    \Edge[Direct,color=red](v6)(v5)
    \Edge[Direct,color=red](v7)(v4)
    \Edge[Direct,color=orange](v5)(v8)
    \end{tikzpicture}
    \caption{The sensing graphs for the examples in Sections \ref{subsec.convergence} and \ref{subsec.extension}.  The graph with only the black edges is LFF.  The graph with the black and red edges is an ordered LFF.  Finally, the graph with black, red, and orange is an unordered LFF.}
    \label{fig:Ex_graph}
\end{figure}
We consider the LFF graph in Fig.\ref{fig:Ex_graph} with black edges, and an ordered LFF graph obtained by adding the red edges.  The leader node is denoted in red ($p_1$) and the first follower in blue ($p_2$).


\begin{figure}[b]
    \begin{subfigure}[b]{.48\linewidth}
    \centering
        \includegraphics[width=\linewidth]{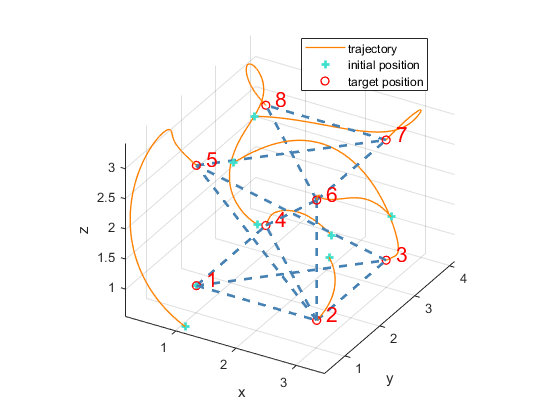}
        \caption{BOFC with LFF graph.}
        \label{fig:LFF_8a3d_tra}
    \end{subfigure}
     \begin{subfigure}[b]{.48\linewidth}
    \centering
        \includegraphics[width=\linewidth]{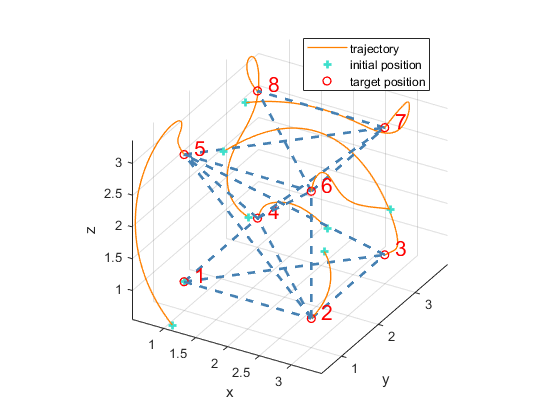}
        \caption{BOFC with ordered LFF graph. }
        \label{fig:OLFF_8a3d_tra}
    \end{subfigure}
    \caption{Trajectories for (a) the LFF graph, and (b) the ordered LFF graph.}
    \label{fig:LFF_tra}
\end{figure}

Figure \ref{fig:LFF_tra} shows the trajectories of the BOFC for the LFF (Fig. \ref{fig:LFF_8a3d_tra}) and ordered LFF (Fig. \ref{fig:OLFF_8a3d_tra}) for a target formation embedded in $\mathbb{R}^3$.  
The bearing error along the trajectories for each case is displayed in Fig.\ref{fig:OLFF_err}. The additional edges in the ordered LFF structure lead to a faster convergence rate to the target formation. 

\begin{figure}[h]
     \centering
        \includegraphics[width=.5\linewidth]{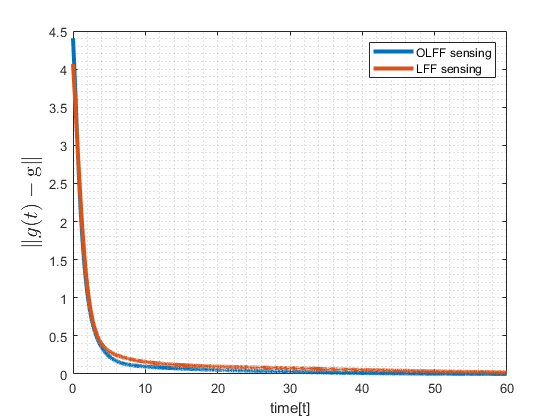}
    \caption{Caparison of bearing error along the system trajectories between the LFF and ordered LFF graphs.}
    \label{fig:OLFF_err}
\end{figure}

\subsection{Further Extensions to the LFF Graphs}\label{subsec.extension}


Finally, we demonstrate that there are additional graph structures that may still solve the BOFC problem.  We consider again the graph in Figure \ref{fig:Ex_graph} with the black, red, and orange edges.  Note that the orange edges are not forward edges, and therefore the graph is neither an LFF or ordered LFF graph.  On the other hand, there is an LFF subgraph in this structure.  Figure \ref{fig:DLFF_8a2d} shows the system trajectories and bearing error of the BOFC using this sensing graph.  The fact that the system converges to the correct target formation suggests there are additional structures that demand further examination.  Note that for this example, the resulting dynamics do not have a cascade structure so the methods used in this work can not apply to study its behavior.  Exploring these other structures is a subject of future work.

\begin{figure}[h]
    \centering
    \begin{subfigure}[b]{.48\linewidth}
    \centering
        \includegraphics[width=\linewidth]{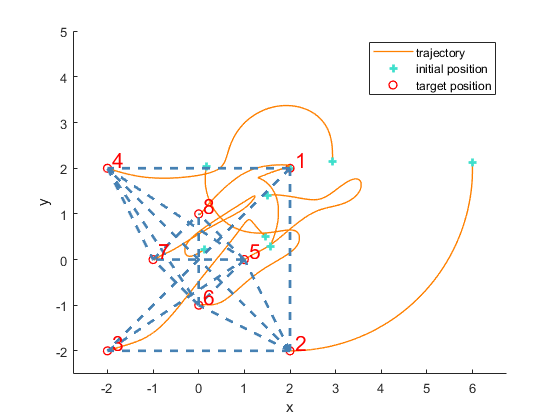}
        \caption{System trajectory.}
        \label{fig:DLFF_8a2d_tra}
    \end{subfigure}
    \begin{subfigure}[b]{.48\linewidth}
    \centering
        \includegraphics[width=\linewidth]{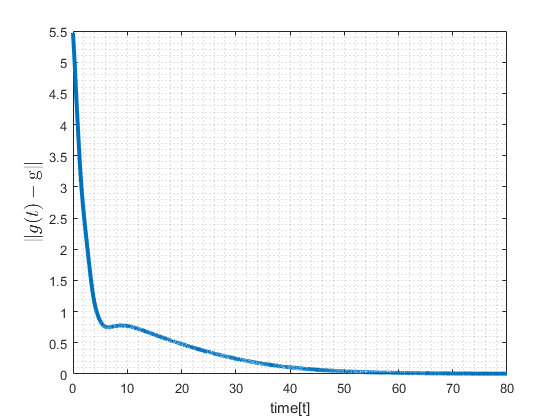}
        \caption{Bearing error along the system trajectories.}
        \label{fig:DLFF_8a2d_err}
    \end{subfigure}
    \caption{Trajectories for the BOFC strategy with a graph that is neither LFF or an ordered LFF graph.}
    \label{fig:DLFF_8a2d}
\end{figure}
    



\section{Conclusion}\label{sec.conclusion}

This work extends the applicability of bearing-only formation control to a broader class of directed sensing graphs by augmenting traditional LFF structures. We demonstrated how forward directed edges can be added to enhance performance while preserving stability, and our simulation results highlight the improved convergence and design flexibility of the proposed methods. Future research will focus on exploring more general directed topologies, incorporating robustness to sensing uncertainties, and addressing real-world constraints, such as communication delays and dynamic network changes, to bridge the gap between theoretical advancements and practical implementations.

\bibliographystyle{IEEEtran}
\bibliography{ref}
 
\end{document}